\numberwithin{equation}{section}
\newtheorem{Thm}[equation]{Theorem}
\newtheorem*{Thm*}{Theorem}
\newtheorem{Prop}[equation]{Proposition}
\newtheorem{Lem}[equation]{Lemma}
\newtheorem{Cor}[equation]{Corollary}
\theoremstyle{remark}
\newtheorem{Def}[equation]{Definition}
\newtheorem{Exa}[equation]{Example}
\newtheorem{Rem}[equation]{Remark}
\newcommand{\nc}{\newcommand}
\nc{\dmo}{\DeclareMathOperator}
\dmo{\Abelem}{Abelem}
\dmo{\Add}{Add}
\dmo{\Free}{Free}
\dmo{\Id}{Id}
\dmo{\Loc}{Loc}
\dmo{\rmR}{R}
\dmo{\Thick}{Thick}
\dmo{\chara}{char}%
\dmo{\Coh}{Coh}
\dmo{\coker}{coker}
\dmo{\colim}{colim}
\dmo{\cone}{cone}
\dmo{\Der}{D}
\dmo{\End}{End}
\dmo{\Ext}{Ext}
\dmo{\rmH}{H}
\dmo{\Hom}{Hom}
\dmo{\id}{id}
\dmo{\Img}{Im}
\dmo{\incl}{incl}
\dmo{\inj}{inj}
\dmo{\Ker}{Ker}
\dmo{\Komp}{K}
\dmo{\LL}{L}
\dmo{\locdeg}{locdeg}
\dmo{\Mod}{Mod}
\dmo{\modname}{mod}%
\dmo{\Mor}{Mor}%
\dmo{\Obj}{Obj}
\dmo{\Or}{Or}
\dmo{\pr}{pr}
\dmo{\Proj}{Proj}
\dmo{\proj}{proj}
\dmo{\rank}{rank}
\dmo{\Res}{Res}
\dmo{\Rname}{R}
\dmo{\SH}{SH}
\dmo{\smallb}{b}
\dmo{\smallperf}{perf}
\dmo{\Spc}{Spc}
\dmo{\Spec}{Spec}
\dmo{\stab}{stab}
\dmo{\supp}{supp}
\dmo{\TTR}{TTR}
\nc{\DbG}{\Db(\kk G\mmod)}
\nc{\uA}{\underline{A}}
\nc{\doublequot}[3]{#1\backslash #2/#3}
\nc{\HGK}{\doublequot HGK}
\nc{\quadtext}[1]{\quad\textrm{#1}\quad}
\nc{\PZG}{\cat C_{\bbZ}(\bbZ G)}
\nc{\TTRK}{\TTR(\cat K)}
\nc{\Gsets}{G\mathsf{-sets}}
\nc{\AddK}{\Add^{\Sigma}(\cat K)}
\nc{\adjto}{\rightleftarrows}
\nc{\AK}{A\MModcat{K}}
\nc{\BK}{B\MModcat{K}}
\nc{\BL}{B\MModcat{L}}
\nc{\bbA}{\mathbb{A}}
\nc{\bbB}{\mathbb{B}}
\nc{\bbC}{\mathbb{C}}
\nc{\bbH}{\mathbb{H}}
\nc{\bbN}{\mathbb{N}}
\nc{\bbQ}{\mathbb{Q}}
\nc{\bbR}{\mathbb{R}}
\nc{\bbZ}{\mathbb{Z}}
\nc{\cat}[1]{\mathscr{#1}}
\nc{\Displ}{\displaystyle}
\nc{\ie}{{i.e.}\ }
\nc{\into}{\mathop{\rightarrowtail}}
\nc{\inv}{^{-1}}
\nc{\isoto}{\buildrel \sim\over\to}
\nc{\isotoo}{\mathop{\buildrel \sim\over\too}}
\nc{\kk}{\Bbbk}
\nc{\onto}{\mathop{\twoheadrightarrow}}
\nc{\too}{\mathop{\longrightarrow}\limits}
\nc{\xytriangle}[7]{\xymatrix@C=#7em{#1\ar[r]^-{\Displ #4} & #2 \ar[r]^-{\Displ #5}&#3\ar[r]^-{\Displ #6}&T #1}}
\nc{\ababs}{{\sl ab absurdo}}
\nc{\adh}[1]{\overline{#1}}
\nc{\adhpt}[1]{\adh{\{#1\}}}
\nc{\ala}{{\sl \`a la}\ }
\nc{\calO}{\mathcal{O}}
\nc{\cV}{\mathcal{V}}
\nc{\Db}{\Der^{\smallb}}
\nc{\Dperf}{\Der^{\smallperf}}
\nc{\eg}{{\sl e.\,g.}}
\nc{\env}{{^{\operatorname{e}}}}
\nc{\FFree}{\,\text{--}\Free}%
\nc{\FFreecat}[1]{\FFree_{\cat #1}}
\nc{\FK}{\mathcal{F}(\cat K)}
\nc{\gm}{\mathfrak{m}}
\nc{\Homcat}[1]{\Hom_{\cat #1}}
\nc{\hook}{\hookrightarrow}
\nc{\Idcat}[1]{\Id_{\cat{#1}}}
\nc{\ideal}[1]{\langle #1\rangle}
\nc{\Kb}{\Komp^{\smallb}}
\nc{\Kcat}[1]{#1\MModcat{K}}
\nc{\KP}{\cat{K}_{\cat P}}
\nc{\loccit}{{\sl loc.\ cit.}}
\nc{\Lotimes}{\otimes^{\LL}}
\nc{\Mid}{\,\big|\,}
\nc{\MMod}{\,\text{-}\Mod}%
\nc{\MModcat}[1]{\MMod_{\cat #1}}%
\nc{\mmod}{\,\text{--}\modname}%
\nc{\mmodb}{\mmod^\sbull}%
\nc{\op}{{^{\operatorname{op}}}}
\nc{\oto}[1]{\overset{#1}{\,\to\,}}
\nc{\otoo}[1]{\overset{#1}{\,\too\,}}
\nc{\ourfrac}[2]{\genfrac{}{}{0pt}{}{\scriptstyle #1}{\scriptstyle #2}}
\nc{\ouriff}{\Leftrightarrow}
\nc{\oursetminus}{\!\smallsetminus\!}
\nc{\potimes}[1]{^{\otimes #1}}
\nc{\pproj}{\,\text{--}\proj}%
\nc{\ptimes}[1]{^{\times #1}}
\nc{\ppp}[1]{^{[#1]}}%
\nc{\pushout}{\textrm{\rm p.o.}}
\nc{\qp}{q_{_{\scriptstyle \cat P}}\!}%
\nc{\Rcat}[1]{\Rname_{\cat #1}^\sbull}
\nc{\RK}{\Rcat{K}}
\nc{\sbull}{{\scriptscriptstyle\bullet}}
\nc{\SET}[2]{\big\{\,#1\Mid#2\,\big\}}
\nc{\SHfin}{\SH^{\text{\rm fin}}}
\nc{\smallmatrice}[1]{\left(\begin{smallmatrix} #1 \end{smallmatrix}\right)}
\nc{\Spccat}[1]{\Spc(\cat #1)}
\nc{\SpcAK}{\Spc(A\MModcat{K})}
\nc{\SpcK}{\Spc(\cat K)}
\nc{\SpcL}{\Spc(\cat L)}
\nc{\suppcat}[1]{\supp(\cat #1)}
\nc{\then}{\Rightarrow}
\nc{\tideal}[1]{\ideal{#1}}
\nc{\unit}{\mathbb{1}}
\nc{\unitcat}[1]{\unit_{\cat #1}}
\begin{document}


\title{Splitting tower and degree of tt-rings}
\author{Paul Balmer}
\date{2013 December 2}

\address{Paul Balmer, Mathematics Department, UCLA, Los Angeles, CA 90095-1555, USA}
\email{balmer@math.ucla.edu}
\urladdr{http://www.math.ucla.edu/$\sim$balmer}

\begin{abstract}
After constructing a splitting tower for separable commutative ring objects in tensor-triangulated categories, we define and study their degree.
\end{abstract}

\subjclass{18E30, 13B40, 20J05, 55U35}
\keywords{Separable, tensor triangulated category, degree}

\thanks{Research supported by NSF grant~DMS-1303073.}

\maketitle


\section*{Introduction}
\medskip

Let $\cat K$ be a tensor-triangulated category (tt-category, for short). Denote its tensor by $\otimes:\cat K\times\cat K\too\cat K$ and its $\otimes$-unit by~$\unit$. Let $A$ be a ring object in~$\cat K$, that is, an associative monoid $\mu:A\otimes A\to A$ with unit $\eta:\unit\to A$. We want to study the \emph{degree} of such a ring object under the assumption that $A$ is what we call a \emph{tt-ring}, \ie is commutative and separable. We focuss on tt-rings because their Eilenberg-Moore category $\AK$ of $A$-modules in~$\cat K$ remains a tt-category and extension-of-scalars $F_A:\cat K\too\AK$ is a tt-functor (a fact which also explains the terminology\,: tt-rings preserve tt-categories). See Section~\ref{se:prelim}.

In practice, tt-rings appear in commutative algebra as finite \'etale algebras and in representation theory of finite groups as the amusing algebras $A=\kk(G/H)$ associated to subgroups~$H<G$; see~\cite{Balmer12bpp}. In the latter case, if $\cat K=\cat K(G)$ is the derived or the stable category of the group~$G$ over a field~$\kk$, then $\AK$ is nothing but the corresponding category $\cat K(H)$ for the subgroup. These two sources already provide an abundance of examples. Furthermore, the topological reader will find tt-rings among ring spectra, equivariant or not.

Let us contemplate the problem of defining a reasonable notion of degree, \ie an integer $\deg(A)$ measuring the size of the tt-ring $A$ in a general tt-category~$\cat K$. When working over a field~$\kk$, it is tempting to use $\dim_\kk(A)$. When $A$ is a projective separable $R$-algebra over a commutative ring~$R$ then its rank must be finite (\cite{DeMeyerIngraham71}) and provides a fine notion of degree for $A$ viewed in the tt-category of perfect complexes~$\Dperf(R)=\Kb(R\pproj)$. However, tt-geometry covers more than commutative algebra. Unorthodox separable algebras already emerge in representation theory for instance, as the above~$A=\kk(G/H)$. In $\DbG$, one can still forget the $G$-action and take dimension over~$\kk$ as a possible degree -- which of course yields the index $[G\!:\!H]$ in that example -- but one step further, in the stable category $\cat K=\stab(\kk G)$, dimension over~$\kk$ becomes a slippery notion whereas the tt-ring $A=\kk(G/H)$ remains equally important. Such questions become even harder in general stable homotopy categories (see~\cite{HoveyPalmieriStrickland97}) where there is simply no ground field~$\kk$ to deal with in the first place. So a good concept of degree in the broad generality of tt-geometry requires a new idea.

Our solution relies on the following Splitting Theorem~\ref{thm:split}, which echoes a classical property of usual separable rings (see~\cite{ChaseHarrisonRosenberg65}). Note that such a result is completely wrong for non-separable rings, already with $B=A[X]$ for instance.

\begin{Thm*}
Let $f:A\to B$ and $g:B\to A$ be homomorphisms of tt-rings such that $g\circ f=\id_A$. Then there exists a tt-ring
$C$ and a ring isomorphism $\smallmatrice{g\\\ast}:B\isoto A\times C$.
\end{Thm*}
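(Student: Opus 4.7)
The plan is to mimic the classical Chase-Harrison-Rosenberg splitting argument for separable algebras: manufacture a central idempotent $e_1$ in $B$ from the separability section of $B$ together with the retraction $g$, and decompose $B$ ring-theoretically along $e_1$.

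To begin, fix a $B$-bimodule section $\sigma:B\to B\otimes B$ of $\mu_B$, available since $B$ is separable in $\cat K$, and let $e:=\sigma\circ\eta_B:\unit\to B\otimes B$ be the resulting separability idempotent. The candidate element of $B$ is
$$e_1\;:=\;\mu_B\circ(\id_B\otimes fg)\circ e\;:\;\unit\longrightarrow B,$$
the categorical rendition of the classical formula $e_1=\sum u_i\,g(v_i)$ when $e=\sum u_i\otimes v_i$; note that $fg:B\to B$ is itself an idempotent endomorphism since $gf=\id_A$.

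Next, I would establish three properties of $e_1$. Property (a): $g\circ e_1=\eta_A$; this uses that $g$ is a ring map (so $g\circ\mu_B=\mu_A\circ(g\otimes g)$) together with $g\circ fg=g$ and $\mu_B\circ\sigma=\id_B$, combining to $g\circ e_1=g\circ\mu_B\circ\sigma\circ\eta_B=g\circ\eta_B=\eta_A$. Property (b) is the absorption identity
$$\mu_B\circ(e_1\otimes\id_B)\;=\;\mu_B\circ(e_1\otimes fg)\;:\;B\longrightarrow B,$$
which is the crux: it encodes ``$e_1\cdot b=e_1\cdot fg(b)$'' and is obtained from the bimodule identity $(b\otimes\eta_B)\cdot e=(\eta_B\otimes b)\cdot e$ (expressing that $\sigma$ is a $B$-bimodule section) by post-composition with $\id_B\otimes g$ and re-absorption through the $A$-algebra structure of $B$ on the second factor. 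Property (c), idempotency $\mu_B\circ(e_1\otimes e_1)=e_1$, follows immediately: specialize (b) to $b=e_1$ and use (a) to rewrite $fg\circ e_1=f\circ\eta_A=\eta_B$, whence $e_1\cdot e_1=e_1\cdot\eta_B=e_1$.

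With $e_1$ a central idempotent (central because $B$ is commutative), set $e_2:=\eta_B-e_1\in\Homcat{K}(\unit,B)$; by (c) it is an orthogonal idempotent and $e_1+e_2=\eta_B$. Multiplication by $e_1$ and by $e_2$ gives complementary idempotent endomorphisms of $B$, which split in $\cat K$ (idempotent-completeness being standard in the tt-setting), yielding a decomposition $B\cong B_1\oplus B_2$ as $B$-modules that inherits a ring product structure $B\cong B_1\times B_2$. The retraction $g$ restricts to a ring isomorphism $B_1\isoto A$ with inverse coming from $f$: the identities $gf=\id_A$ and (a) give one composite, while (b) gives the other. Setting $C:=B_2$ produces the asserted isomorphism $\smallmatrice{g\\\ast}:B\isoto A\times C$. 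Commutativity of $C$ is inherited from $B$, and separability of $C$ follows from $B\cong A\times C$ being separable together with the fact that a ring-theoretic factor of a separable ring is separable.

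The main obstacle is the careful diagrammatic verification of (b), where the $B$-bimodule structure of $B\otimes B$ must be genuinely combined with the retraction hypothesis $gf=\id_A$; everything else is then either a direct consequence of (a)--(c) or a standard idempotent-splitting argument.
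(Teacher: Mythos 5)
Your proof is correct in outline, but it takes a genuinely different route from the paper. You carry out the classical Chase--Harrison--Rosenberg construction internally: from a bimodule section $\sigma$ of $\mu_B$ and the separability idempotent $e=\sigma\eta_B$ you build $e_1=\mu_B(\id_B\otimes fg)e$, prove the three properties (a)--(c), and split $B$ along the complementary central idempotents $e_1$ and $\eta_B-e_1$ using idempotent-completeness of $\cat K$. The paper instead never produces an explicit idempotent: it passes to the triangulated category $B^{\mathrm e}\MModcat{K}$ of $B,B$-bimodules (triangulated because $B^{\mathrm e}$ is separable), completes $g$ to a distinguished triangle $C\to B\oto{g} A\to\Sigma C$ there, observes that the connecting map dies because $g$ is split in $\cat K$ and $U_{B^{\mathrm e}}$ is faithful, obtains a bimodule decomposition $B\cong A\oplus C$, and then invokes its Lemma~\ref{lem:bimod} to transport the ring structure across any bimodule splitting. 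Your approach is more elementary and explicit, and has the advantage of exhibiting the idempotent directly; it also does not in principle need the triangulation, only idempotent-completeness. The paper's approach fits more naturally into the tt-machinery already developed (triangulated bimodule categories, faithfulness of the forgetful functor) and avoids the diagram chase you flag as the main obstacle in step~(b), replacing it with the soft observation that a split morphism has zero connecting map; it also has the small methodological advantage of not appealing to commutativity of $B$ to get centrality of the idempotent, since no idempotent is ever produced (your argument, by contrast, relies on commutativity at exactly that point, which is harmless here since tt-rings are commutative by definition). Note also that the paper's Lemma~\ref{lem:bimod} is doing the work you compress into ``inherits a ring product structure $B\cong B_1\times B_2$''; and that you must still verify, as the paper does, that the induced ring structure on the factor identified with $A$ is the original one (this comes for free because $g$ is a split ring epimorphism for both structures).
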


Using this theorem, we construct (Definition~\ref{def:tower}) a tower of tt-rings and homomorphisms $A=:A\ppp{1}\to A\ppp{2}\to\cdots \to A\ppp{n}\to\cdots$ such that after extending scalars to $A\ppp{n}$ our $A$ splits as the product of $\unit\times\cdots\times\unit$ ($n$ times) with~$A\ppp{n+1}$. The \emph{degree} of~$A$ is defined to be the last $n$ such that $A\ppp{n}\neq0$ (Definition~\ref{def:deg}).

We prove a series of results which show that this concept of degree behaves according to intuition and provides a reasonable invariant. In basic examples, we recover expected values, like $[G\!:\!H]$ in the case of $\kk(G/H)$ in~$\DbG$. In the stable category however, $\deg(\kk(G/H))$ can be smaller than~$[G\!:\!H]$. In the extreme case of $H< G$ strongly $p$-embedded, we even get $\deg(\kk(G/H))=1$ in $\stab(\kk G)$; see Example~\ref{exa:nat-G/H}. We prove in Section~\ref{se:exas} that the degree is finite for every tt-ring in the derived category of perfect complexes over a scheme, in the bounded derived category of a finite dimensional cocommutative Hopf algebra and in the stable homotopy category of finite spectra.

It is an open question whether the degree must always be finite, at least locally.

Several aspects of this work extend to non-triangulated additive tensor categories. This is discussed in Remark~\ref{rem:non-t}.

In~\cite{Balmer13ppb}, our degree theory will be used to control the fibers of the map $\Spc(\AK)\to \SpcK$. We shall notably reason by induction on the degree, thanks to the following Theorem~\ref{thm:ind-deg}\,:
\begin{Thm*}
Let $A$ be a tt-ring of finite degree $d$ in~$\cat K$. Then in the tt-category of $A$-modules, we have an isomorphism of tt-rings $F_A(A)\simeq\unit\times A\ppp{2}$ where the tt-ring $A\ppp{2}$ has degree $d-1$ in~$\AK$.
\end{Thm*}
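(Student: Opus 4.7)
The isomorphism $F_A(A)\simeq\unit\times A\ppp{2}$ is the $n=1$ step of the splitting tower (Definition~\ref{def:tower}), so my plan is to invoke the Splitting Theorem~\ref{thm:split} inside~$\AK$ with the pair of tt-ring homomorphisms $\eta:\unit\to F_A(A)$ (the unit of the ring object $F_A(A)$ in~$\AK$) and $\mu:F_A(A)\to\unit$ (the multiplication of~$A$, which is a ring homomorphism because $A$ is commutative; conceptually it is the counit of the monoidal adjunction $F_A\dashv U_A$ at the unit $\unit_{\AK}$). The unit axiom of~$A$ immediately yields $\mu\circ\eta=\id$, so the Splitting Theorem produces the decomposition and defines $A\ppp{2}$ in~$\AK$.

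For the degree statement, the plan is to identify the splitting tower of $A\ppp{2}$ built inside~$\AK$ with the tail of the splitting tower of~$A$ built inside~$\cat K$, shifted by one index. The key point is that the construction of $A\ppp{n+1}$ from $A\ppp{n}$ is \emph{intrinsic}: it is the cofactor in the splitting of $F_{A\ppp{n}}(A\ppp{n})$ into $\unit\times A\ppp{n+1}$, a construction depending only on the tt-ring $A\ppp{n}$ in its ambient tt-category, not on the original~$A$. An induction on $k\geq 1$ then yields $(A\ppp{2})\ppp{k}\simeq A\ppp{k+1}$: the case $k=1$ is tautological, and the inductive step applies the intrinsic $(-)\ppp{2}$ operation to both sides of the previous identification.

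With this shift established, the conclusion is immediate. Since $\deg(A)=d$ means $A\ppp{d}\neq 0$ and $A\ppp{d+1}=0$, the identification above translates to $(A\ppp{2})\ppp{d-1}\neq 0$ and $(A\ppp{2})\ppp{d}=0$, i.e.\ $\deg(A\ppp{2})=d-1$ in~$\AK$. I expect the main technical obstacle to be the bookkeeping of the nested tt-categories: each $A\ppp{n}$ is constructed one extension-of-scalars step deeper than the previous, and one must carefully track that iterating the construction from $A\ppp{2}\in\AK$ produces the same chain of tt-categories (and the same successive rings) as iterating from $A\in\cat K$, shifted by one index. Once this identification of ambient categories is set up, the inductive argument above goes through without further work.
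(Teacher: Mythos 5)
Your proof is correct and follows essentially the same route as the paper: the splitting $F_A(A)\simeq\unit\times A\ppp{2}$ is Theorem~\ref{thm:A'} read inside~$\AK$ via Remark~\ref{rem:alg}, with exactly the pair $f=F_A(\eta)=1_A\otimes\eta$, $g=\mu$ that you chose; and the index shift $(A\ppp{2})\ppp{k}\cong A\ppp{k+1}$ is precisely the observation $(A\ppp{n})\ppp{m}\cong A\ppp{n+m-1}$ already recorded in Remark~\ref{rem:n->n+1}, which the paper's proof invokes directly. Your induction merely spells out the ``intrinsic'' bookkeeping that the paper leaves implicit.
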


\medbreak

\noindent\textit{Convention}\,: All our tt-categories are essentially small and idempotent complete.

\goodbreak
\section{The tt-category of $A$-modules}
\label{se:prelim}%
\medskip

We quickly list standard properties of the Eilenberg-Moore category~$\AK$ of $A$-modules in~$\cat K$, see~\cite{EilenbergMoore65}, \cite[Chap.\,VI]{MacLane98} or~\cite{Balmer11}.

As $A$ is \emph{separable} (\ie $\mu:A\otimes A\to A$ has a section $\sigma$ as $A,A$-bimodules) the category $\AK$ admits a unique triangulation such that both extension-of-scalars $F_A:\cat K\too \AK$, $x\mapsto A\otimes x$, and its forgetful right adjoint $U_A:\AK\too\cat K$ are exact; see~\cite{Balmer11}. Also, $\AK$ is equivalent to the idempotent completion of the \emph{Kleisli category} $A\FFreecat{K}$ of free $A$-modules, see~\cite{Kleisli65}. Objects of $A\FFreecat{K}$ are the same as those of~$\cat K$, denoted $F_A(x)$ for every $x\in\cat K$, and morphisms $\Hom_{A}(F_A(x),F_A(y)):=\Homcat{K}(x,A\otimes y)$. Denote by $\bar f:F_A(x)\to F_A(y)$ the morphism in $A\FFreecat{K}$ corresponding to $f:x\to A\otimes y$ in~$\cat K$.

As our tt-ring $A$ is furthermore \emph{commutative}, there is a tensor structure $-\otimes_A-:\AK\ \times\ \AK\too\AK$ making $F_A:\cat K\too \AK$ a tt-functor. Indeed, one can define $\otimes_A$ on the Kleisli category by $F_A(x)\otimes_A F_A(y):=F_A(x\otimes y)$ and $\bar f\otimes_A \bar g=\overline{(\mu\otimes1\otimes1)(23)(f\otimes g)}$ if $f:x\to A\otimes x'$ and $g:y\to A\otimes y'$, as below\,:
$$
\xymatrix@C=3em{
x\otimes y \ar[r]^-{f\otimes g}
& A\otimes x'\otimes A\otimes y' \ar[r]^-{(23)}
& A\otimes A\otimes x'\otimes y' \ar[r]^-{\mu\otimes 1\otimes 1}
& A\otimes x'\otimes y'.
}
$$
Idempotent completion then yields $\otimes_A$ on $\AK$. One can also describe $\otimes_A$ on all modules directly. First only assume that $A$ is separable with a chosen $A,A$-bimodule section $\sigma:A\to A\otimes A$ of~$\mu$. Let $(x_1,\varrho_1)$ and $(x_2,\varrho_2)$ be right and left $A$-modules in~$\cat K$ respectively. Then the following endomorphism $v$ in~$\cat K$
\begin{equation}
\label{eq:v}%
\xymatrix@C=2.5em{
v:\,x_1\otimes x_2 \ar[r]^-{1\otimes \eta\otimes 1}
& x_1\otimes A\otimes x_2 \ar[r]^-{1\otimes \sigma\otimes 1}
& x_1\otimes A\otimes A\otimes x_2 \ar[r]^-{\varrho_1\otimes \varrho_2}
& x_1\otimes x_2 }
\end{equation}
is an idempotent\,: $v\circ v=v$. Hence one can define $x_1\otimes_A x_2:=\Img(v)$ as the corresponding direct summand of~$x_1\otimes x_2$. We obtain a split coequalizer in~$\cat K$\,:
$$
\xymatrix@C=4em{x_1\otimes A\otimes x_2\
\ar@<.2em>[r]^{\varrho_1\otimes1}
\ar@<-.2em>[r]_{1\otimes \varrho_2}
&\ x_1\otimes x_2\
 \ar@{->>}[r]^-{v}
&\ \Img(v)=x_1\otimes_A x_2
}
$$
as in the traditional definition of $\otimes_A$. When $A$ is commutative, left and right $A$-modules coincide and one induces an $A$-action on $x_1\otimes_Ax_2$ from the usual formula on $x_1\otimes x_2$. One verifies that this coincides with the tensor constructed above.

\begin{Prop}[Projection formula]
\label{prop:proj}%
Let $A$ be a tt-ring in~$\cat K$. For all $y\in \cat K$ and $x\in A\MModcat{K}$, we have a natural isomorphism $U_A(x\otimes_A F_A(y))\cong U_A(x)\otimes y$ in~$\cat K$.
\end{Prop}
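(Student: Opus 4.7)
The plan is to reduce to the case of free $A$-modules, where the identity is immediate by construction, and then extend to general $A$-modules via the identification of $\AK$ with the idempotent completion of the Kleisli category $A\FFreecat{K}$.

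For $x = F_A(x')$ free, the description of $\otimes_A$ on $A\FFreecat{K}$ recalled just before~\eqref{eq:v} gives $F_A(x') \otimes_A F_A(y) = F_A(x' \otimes y)$, whence
\[ U_A(F_A(x') \otimes_A F_A(y)) \;=\; A \otimes x' \otimes y \;=\; U_A(F_A(x')) \otimes y. \]
This identification is clearly natural in $y$ and, using the counit identity $\mu \circ (1_A \otimes \eta) = 1_A$, also natural in Kleisli morphisms in the $x$-slot. Thus the two additive functors $\AK \to \cat K$ given by $x \mapsto U_A(x \otimes_A F_A(y))$ and $x \mapsto U_A(x) \otimes y$ agree naturally on the full subcategory $A\FFreecat{K}$. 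Since $\AK$ is the idempotent completion of $A\FFreecat{K}$ and $\cat K$ is idempotent complete by our standing convention, every $A$-module $x$ is the image of an idempotent on some free module, and both functors compute the image of the corresponding (naturally identified) idempotent in $\cat K$. This yields the desired natural isomorphism on all of~$\AK$.

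Alternatively, and more concretely, set $\alpha := 1_x \otimes \eta \otimes 1_y : x \otimes y \to x \otimes A \otimes y$ and $\beta := \varrho \otimes 1_y : x \otimes A \otimes y \to x \otimes y$, where $\varrho$ is the $A$-action on~$x$. The module unit axiom gives $\beta \alpha = \id$. The key computation is the identity
\[ v \circ \alpha \circ \beta \;=\; v \qquad \text{on } x \otimes A \otimes y, \]
with $v$ as in~\eqref{eq:v}; combined with $\beta\alpha = \id$, this shows that $\alpha$ and $\beta$ descend to mutually inverse isomorphisms between $x \otimes y$ and the retract $\Img(v) = U_A(x \otimes_A F_A(y))$.

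The identity $v \circ \alpha \circ \beta = v$ is the main obstacle: it hinges on the $A$-bimodule property of the separability section, namely $\sigma(a) = a \cdot \sigma(1) = \sigma(1) \cdot a$, which permits shifting the $A$-action from the middle tensor slot to the $x$-slot and collapsing both sides to the same expression. This is precisely where commutativity of $A$ and the bimodule character of the separability section enter, and it is exactly the amount of structure afforded by a tt-ring.
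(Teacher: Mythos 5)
Your first argument (the Kleisli reduction) is essentially the paper's own proof: for free modules the isomorphism is the associator $A\otimes z\otimes y\cong(A\otimes z)\otimes y$, the real content is checking naturality with respect to Kleisli morphisms $\bar f$ (not merely morphisms of $\cat K$), and then one extends over the idempotent completion. That part is correct and matches the paper.

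The ``alternatively, and more concretely'' paragraph, however, has a genuine gap. You claim that $v\alpha\beta=v$ together with $\beta\alpha=\id$ already forces $\alpha$ and $\beta$ to descend to mutually inverse isomorphisms between $x\otimes y$ and $\Img(v)$. It does not. Writing $\Img(v)=(V,i,p)$ with $ip=v$, $pi=\id$, and setting $\phi:=p\alpha$, $\psi:=\beta i$, your identity $v\alpha\beta=v$ gives $\phi\psi=\id_V$, but the other composite is $\psi\phi=\beta v\alpha$, and neither of your two identities pins this down. (Concretely: in $\cat K=\Mod_{\bbZ}$ take $\alpha:\bbZ^2\to\bbZ^3$ the inclusion of the first two coordinates, $\beta$ the projection, and $v$ the projection onto the first coordinate; then $\beta\alpha=\id$ and $v\alpha\beta=v$, yet $\beta v\alpha\neq\id$.) The missing ingredient is the further identity $\beta v=\beta$, which together with $\beta\alpha=\id$ yields $\beta v\alpha=\id$. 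That identity is where the fact that $\sigma$ is a \emph{section} of $\mu$ (i.e.\ $\mu\sigma=\id_A$) enters; the bimodule property of~$\sigma$ alone, which is all you invoke, does not give it. So the direct computation needs one more line, using $\mu\sigma=\id_A$, before it closes.
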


\begin{proof}
By construction of~$\otimes_A$, it suffices to prove the existence of such an isomorphism for $x\in A\FFreecat{K}$, which is natural in~$x$ in that category (and in~$y$ too but that is easy). So, let $x=F_A(z)$ for some $z\in \cat K$. Then $U_A(x\otimes F_A(y))=U_A(F_A(z)\otimes_A F_A(y))=U_A(F_A(z\otimes y))=A\otimes (z\otimes y)\cong (A\otimes z) \otimes y=U_A(x)\otimes y$. This looks trivial but the point is that this isomorphism is natural with respect to morphisms $\bar f:x=F_A(z)\to F_A(z')=x'$ in~$A\FFreecat{K}$ for $f:z\to A\otimes z'$ in~$\cat K$ (not just natural in~$z$). This is now an easy verification.
\end{proof}

\begin{Rem}
\label{rem:sep-factor}%
For two ring objects $A$ and $B$, the ring object $A\times B$ is $A\oplus B$ with component-wise structure. The ring object $A\otimes B$ has multiplication $(\mu_1\otimes\mu_2)\, (23):(A\otimes B)\potimes{2}\too A\otimes B$ and obvious unit. The \emph{opposite} $A\op$ is~$A$ with $\mu\op=\mu\,(12)$. The \emph{enveloping ring} $A\env$ is $A\otimes A\op$. Left $A\env$-modules are just $A,A$-bimodules.

If $A$ and $B$ are separable, then so are $A\times B$, $A\otimes B$ and $A\op$. Conversely, if $A\times B$ is separable then so are $A$ and $B$ (restrict the section~``$\sigma$" to each factor).
\end{Rem}

\begin{Rem}
\label{rem:AB}%
Let $h:A\to B$ be a homomorphism of tt-rings in~$\cat K$ (\ie $h$ is compatible with multiplications and units). We also say that $B$ is an \emph{$A$-algebra} or a tt-ring \emph{over~$A$}. Then idempotent-complete the functor $F_h:A\FFreecat{K}\too B\FFreecat{K}$ defined on objects by $F_h(F_A(x))=F_B(x)$ and on morphisms by $F_h(\bar f)=\overline{(h\otimes 1)\circ f}$.
Alternatively, equip $B$ with a right $A$-module structure via~$h$ and define for every $A$-module $x\in\AK$, its extension $F_h(x)=B\otimes_Ax$ equipped with the left $B$-module structure on the~$B$ factor. Both define the same tt-functor $F_h:A\MModcat{K}\too B\MModcat{K}$ and the following diagram commutes up to isomorphism\,:
$$
\xymatrix@R=1.5em{
& \cat K \ar[ld]_-{F_A} \ar[rd]^-{F_B}
\\
A\MModcat{K} \ar[rr]^-{F_h\,\cong \,B\otimes_A-}
&& B\MModcat{K}\,.
}
$$
Furthermore, if $k:B\to C$ is another homomorphism then $F_{kh}\cong F_k\circ F_h$.
\end{Rem}

\begin{Rem}
\label{rem:alg}%
For $A$ a tt-ring in~$\cat K$, there is a one-to-one correspondence between
\begin{enumerate}[(i)]
\item $A$-algebras in~$\cat K$, \ie homomorphism $h:A\to B$ of tt-rings in~$\cat K$, and
\item tt-rings $\bar B$ in $\AK$.
\end{enumerate}
The correspondence is the obvious one\,: To every tt-ring $\bar B=(\bar B,\bar \mu,\bar \eta)$ in $\AK$, associate  $B:=U_A(\bar B)$ and $h:=U_A(\bar\eta)$. The ring structure on~$B$ is given by $B\otimes B=U_A(\bar B)\otimes U_A(\bar B)\overset{v}{\,\onto\,} U_A(\bar B\otimes_A \bar B)\oto{\bar \mu}U_A(\bar B)=B$ and $\eta_B:\unit\oto{\eta_A}A\oto{h} B$. Conversely, if $h:A\to B$ is a homomorphism, then one can use $h$ to equip $\bar B:=B$ with an $A$-module structure and verify that $\mu:B\otimes B\to B$ respects the idempotent $v$ of~\eqref{eq:v}, hence defines $\bar \mu:B\otimes_AB\to B$. Then $B$ is separable in~$\cat K$ (with section $\sigma$ of~$\mu$) if and only if $\bar B$ is separable in~$\AK$ (with section $v\,\sigma$ of~$\bar \mu$).

We tacitly use this dictionary below. If we need to distinguish the $A$-algebra~$B$ in~$\cat K$ from the tt-ring $\bar B$ in~$\AK$, we shall write $U_A(\bar B)$ for the former.

Under this correspondence, if $\cat L:=\AK$, there is an equivalence $B\MModcat{K}\cong\bar B\MModcat{L}$ such that the following diagram commutes up to isomorphism\,:
$$
\xymatrix{\cat K \ar[r]^-{F_A} \ar[d]_-{F_B}
& \cat L=\AK \ar[d]^-{F_{\bar B}}
\\
B\MModcat{K}\ar@{-}[r]^-{\simeq}
& \bar B\MModcat{L}\,.
}
$$
On Kleisli categories, it maps $F_B(x)$ to $F_{\bar B}(F_A(x))$ for every $x\in \cat K$ and follows the sequence of isomorphisms $\Hom_B(F_B(x),F_B(y))\cong \Homcat{K}(x,B\otimes y)\cong\Hom_A(F_A(x),B\otimes y)\cong\Hom_A(F_A(x),\bar B\otimes_A F_A(y))\cong\Hom_{\bar B}(F_{\bar B}F_A(x),F_{\bar B}F_A(y))$ on morphisms. Idempotent completion does the rest.
\end{Rem}

\begin{Rem}
\label{rem:F-Mod}%
Let $F:\cat K\to \cat L$ be a tt-functor. Let $A$ be a tt-ring in~$\cat K$ and let $B:=F(A)$ its image in~$\cat L$. Then $B$ is also a tt-ring and there exists a tt-functor $\bar F:A\MModcat{K}\too B\MModcat{L}$
\begin{equation}
\label{eq:barF}%
\vcenter{\xymatrix{\cat K \ar[r]^-{F} \ar@<-.2em>[d]_-{F_A}
& \cat {L} \ar@<-.2em>[d]_-{F_B}
\\
A\MModcat{K} \ar@<-.2em>[u]_-{U_A} \ar[r]^-{\bar F}
& B\MModcat{L} \ar@<-.2em>[u]_-{U_B}
}}
\end{equation}
such $\bar FF_A=F_BF$ and $U_B\bar F=F\,U_A$. Explicitly, for every $A$-module $(x,\varrho)$, we have $\bar F(x,\varrho)=(F(x),F(\varrho))$ where $B\otimes F(x)=F(A)\otimes F(x)\cong F(A\otimes x)\otoo{F(\varrho)} F(x)$. On morphisms, $\bar F(f)=F(f)$. The ``Kleislian" description of~$\bar F$ is equally easy.
\end{Rem}

\goodbreak
\section{Splitting theorems}
\label{se:split}%
\medskip

We will iteratively use the following splitting result\,:

\begin{Thm}
\label{thm:split}%
Let $f:A\to B$ and $g:B\to A$ be homomorphisms of tt-rings in $\cat K$ such that $g\circ f=\id_A$. Then there exists a tt-ring~$C$ and a ring isomorphism $h:B\isoto A\times C$ such that $\pr_1\, h=g$. Consequently, $C$ becomes an $A$-algebra, via $\pr_2\,h\,f$. Moreover, if $C'$ is another $A$-algebra and $h':B\isoto A\times C'$ is another $A$-algebra isomorphism such that $\pr_1\,h=g$, then there exists an isomorphism of $A$-algebras $\ell:C\isoto C'$ such that $h'=(1\times\ell)\,h$.
\end{Thm}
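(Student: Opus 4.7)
The plan is to construct a canonical ring idempotent $e:\unit\to B$ and use it to decompose~$B$.

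\emph{Reduction to $A=\unit$.} By the dictionary of Remark~\ref{rem:alg}, we may replace $(\cat K,A)$ by $(\AK,\unit)$ and view $B$ as a tt-ring in $\AK$ equipped with a ring retraction $g:B\to\unit$ of its unit. Thus we may assume from the outset that $A=\unit$, $f=\eta_B$, and $g\,\eta_B=\id_\unit$.

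\emph{The idempotent.} Let $\sigma:B\to B\otimes B$ be a $B,B$-bimodule section of~$\mu$, provided by separability of~$B$, and set
$$
e\;:=\;(g\otimes 1_B)\circ\sigma\circ\eta_B\,:\,\unit\too B.
$$
I will verify:
\begin{enumerate}[(i)]
\item $g\circ e=\id_\unit$, immediate from $\mu\sigma\eta=\eta$, the ring-homomorphism identity $g\mu=g\otimes g$, and $g\eta=\id$.
\item $\mu(e\otimes 1_B)=e\circ g$ as endomorphisms of~$B$. This is the crucial identity. Set $\phi:=(g\otimes 1)\sigma:B\to B$. Applying $g\otimes 1$ to the right-bimodule equation $\sigma\mu=(1\otimes\mu)(\sigma\otimes 1)$ shows that $\phi$ is right $B$-linear, so $\phi=\mu(\phi(\eta)\otimes 1_B)=\mu(e\otimes 1_B)$. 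Applying $g\otimes 1$ to the left-bimodule equation $\sigma\mu=(\mu\otimes 1)(1\otimes\sigma)$ and using $g\mu=g\otimes g$ shows that the same map factors as $\phi=e\circ g$.
\item $\mu(e\otimes e)=e$: by (i)--(ii), $\mu(e\otimes e)=\mu(e\otimes 1)\circ e=e\circ g\circ e=e$.
\end{enumerate}

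\emph{Splitting the ring.} Using the additive structure of~$\cat K$, form $e':=\eta_B-e:\unit\to B$. From (i)--(iii) and commutativity of~$\mu$, $e$ and $e'$ are orthogonal central ring idempotents summing to~$\eta_B$. The endomorphism $p:=\mu(e\otimes 1)=e\circ g$ is idempotent, and idempotent completeness of~$\cat K$ yields an object splitting $B\cong B_1\oplus C$ with $B_1:=\Img(p)$ and $C:=\Img(\id_B-p)$. Then $g$ restricts to an isomorphism $B_1\isoto\unit$ (inverse~$e$), and the orthogonality of $e,e'$ upgrades this to a ring isomorphism $h:B\isoto\unit\times C$ with $\pr_1 h=g$. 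Since a direct factor of a separable ring is separable (Remark~\ref{rem:sep-factor}), $C$ is a tt-ring.

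\emph{Uniqueness, and the main obstacle.} Properties (i) and (ii) characterise~$e$ uniquely: if $e_1,e_2$ both satisfy them then $\mu(e_1\otimes e_2)=e_1\circ g\circ e_2=e_1$ and, by commutativity, also $=e_2$. For any other ring isomorphism $h':B\isoto A\times C'$ with $\pr_1 h'=g$, the morphism $e'':=(h')\inv(\id_\unit,0):\unit\to B$ satisfies (i) and (ii) by a direct verification in $A\times C'$, so $e''=e$; hence the two decompositions of~$B$ coincide, and composing the restrictions to the second summand produces the required $\ell:C\isoto C'$ with $h'=(1\times\ell)h$. The main obstacle is property~(ii): extracting $\mu(e\otimes 1)=e\circ g$ requires combining both halves of the bimodule property of~$\sigma$, and everything else---idempotency of~$e$, the ring-level splitting, and uniqueness---ultimately rests on this single identity.
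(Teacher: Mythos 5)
Your proof is correct, and it takes a genuinely different route from the paper's. The paper equips $A$ with a $B,B$-bimodule structure via $g$, passes to the triangulated category $B\env\MModcat{K}$ of bimodules (which exists because $B\env$ is separable), picks a distinguished triangle over $g$ there, observes that the connecting map vanishes because $g$ is split in $\cat K$ and $U_{B\env}$ is faithful, and then uses Lemma~\ref{lem:bimod} to transfer the ring structure to the resulting direct summands and Lemma~\ref{lem:triang} for uniqueness. You instead first reduce to $A=\unit$ via Remark~\ref{rem:alg} (a simplification the paper does not perform), and then construct the canonical ring idempotent $e=(g\otimes 1)\sigma\eta$ explicitly from the separability section $\sigma$. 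The heart of the matter is indeed your identity (ii), $\mu(e\otimes 1_B)=e\circ g$, which you correctly extract by applying $g\otimes 1$ to both bimodule-linearity relations for $\sigma$; from it, idempotency, orthogonality of $e$ and $\eta_B-e$, the ring-level splitting, and the characterization of $e$ (hence uniqueness of $C$) all follow. The last step is stated a bit tersely: after establishing $e''=e$, one still needs to note that the comparison isomorphism $k=h'h\inv=\smallmatrice{1&0\\c&d}$ sends the inclusion $(\id_\unit,0)$ to itself, forcing $c=0$ and thus $k=1\times\ell$ with $\ell$ a ring isomorphism. This is exactly the content of the paper's Lemma~\ref{lem:triang}, but your route through the idempotent already delivers $c=0$ directly, so the gap is cosmetic rather than mathematical.

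The trade-off between the two approaches is interesting. The paper's argument is short but leans on the triangulation of bimodule categories; yours is purely additive and only uses separability of $B$ itself, idempotent-splitting, and the tensor structure. This makes your proof directly applicable in the non-triangulated setting alluded to in Remark~\ref{rem:non-t}, and it makes the analogy with the classical commutative-algebra splitting for separable algebras (\`a la Chase--Harrison--Rosenberg, Villamayor--Zelinsky) completely explicit: $e$ is the image of the separability idempotent under the retraction $g$. What the paper's proof buys, by contrast, is that it reuses the already-developed machinery of module categories over tt-rings and avoids any element-free diagram chasing with $\sigma$.
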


We start with a couple of additive lemmas.

\begin{Lem}
\label{lem:bimod}%
Let $B$ be a ring object, $B_1$ and $B_2$ two $B,B$-bimodules and $h:B\isoto B_1\oplus B_2$ an isomorphism of $B,B$-bimodules. Then $B_1$ and $B_2$ admit unique structures of ring objects such that $h$ becomes a ring isomorphism $B\isoto B_1\times B_2$.
\end{Lem}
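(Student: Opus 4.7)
My plan is to transport the multiplication and unit of $B$ along $h$ and observe that the $B,B$-bimodule compatibility forces the transported multiplication to be block-diagonal with respect to the decomposition $B_1 \oplus B_2$. Introduce the bimodule maps $p_i := \pr_i \circ h : B \to B_i$ and $s_i := h\inv \circ \incl_i : B_i \to B$, so that $p_i s_j = \delta_{ij}\id_{B_i}$ and $s_1 p_1 + s_2 p_2 = \id_B$, and set
\[
\eta_i := p_i \circ \eta, \qquad \mu_i := p_i \circ \mu \circ (s_i \otimes s_i).
\]
These formulas are forced by the requirement that $h$ be a ring isomorphism to $B_1 \times B_2$: projecting $h\circ\eta$ onto $B_i$ forces $\eta_i$, and composing the equation $h\circ\mu = (\mu_1'\times\mu_2')\circ(h\otimes h)$ with $p_i$ on the left and $s_i\otimes s_i$ on the right forces $\mu_i$. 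This gives \emph{uniqueness} immediately.

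The crux of \emph{existence} is the off-diagonal vanishing
\[
p_i \circ \mu \circ (s_j \otimes s_k) = 0 \quad\text{unless } j = k = i.
\]
This is where the bimodule hypothesis enters. Since $p_i$ is a left $B$-module map and $\mu$ \emph{is} the left $B$-action on itself, $p_i\circ\mu$ equals the left $B$-action on $B_i$ precomposed with $1_B \otimes p_i$; substituting, the displayed composite equals the left action applied to $s_j \otimes (p_i s_k)$, which vanishes whenever $i \neq k$. A symmetric argument using the right $B$-module structure on $B_i$ and the fact that $s_k$ is a right $B$-module map handles the case $i \neq j$.

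Once the off-diagonal vanishing is in hand, decomposing $\mu$ and $\eta$ through the identities $\id_B = s_1 p_1 + s_2 p_2$ on source and target yields $h\circ\mu = (\mu_1 \oplus \mu_2)\circ(h\otimes h)$ and $h\circ\eta = (\eta_1, \eta_2)$, so $h$ intertwines the ring structure on $B$ with the would-be product ring structure on $B_1 \times B_2$ (as in Remark~\ref{rem:sep-factor}). Since $h$ is an isomorphism, associativity and the unit axiom for each $(B_i,\mu_i,\eta_i)$ are inherited from those of $B$ by transport of structure, reading off the $B_i\otimes B_i\otimes B_i \to B_i$ and $\unit\otimes B_i\to B_i$ components of the transported axioms. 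I do not anticipate a serious obstacle beyond the off-diagonal vanishing itself; everything after that is routine bookkeeping in the underlying additive category.
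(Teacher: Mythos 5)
Your proof is correct and matches the paper's argument in all essentials: transport $\mu,\eta$ along $h$ for uniqueness, then use $B,B$-bilinearity together with the biproduct relations $p_is_j=\delta_{ij}$ (equivalently $h_ik_j=\delta_{ij}$) to kill the off-diagonal blocks, and inherit associativity and unitality by transport of structure. The only cosmetic difference is that you mostly invoke bilinearity of the projections $p_i$ whereas the paper factors through bilinearity of the sections $k_j$ (and the description of your symmetric $i\neq j$ case should invoke $p_i$ or $s_j$ rather than $s_k$), but this is the same mechanism viewed from the other side of the isomorphism.
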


\begin{proof}
Write the given isomorphisms $h=\smallmatrice{h_1\\h_2}:B \isotoo B_1\oplus B_2$ and $h\inv=\smallmatrice{k_1 & k_2}:B_1\oplus B_2\isotoo B$. If $h$ is to be a ring isomorphism, we must have for $i=1,2$ that the multiplication $\mu_i:B_i\otimes B_i\to B_i$ is given by $\mu_i=h_i\,\mu\,(k_i\otimes k_i)$ and the unit $\eta_i:\unit\to B_i$ by $\eta_i=h_i\, \eta$. Hence uniqueness. Conversely, let us see that these formulas provide the wanted ring structures. Let $\rho:B\otimes B_2\too B_2$ be the left $B$-action on~$B_2$. By left $B$-linearity of $k_2:B_2\to B$, we have $\mu\,(1\otimes k_2)=k_2\, \rho:B\otimes B_2\to B$. Note that $h\,h\inv =\smallmatrice{1&0\\0&1}$ implies $h_ik_j=0$ when $i\neq j$. Therefore $h_1\,\mu\,(1\otimes k_2)=h_1\, k_2\, \rho=0$. Similarly, $B,B$-linearity of~$k_1$ and $k_2$ gives $h_i\,\mu\,(1\otimes k_j)=0$ and $h_i\,\mu\,(k_j\otimes 1)=0$ when $i\neq j$. So the bottom square of the following diagram commutes, in which the top one commutes by definition\,:
$$
\vcenter{\xymatrix@C=6em@R=2em{
(B_1\!\oplus\! B_2)\otimes (B_1\!\oplus\! B_2)\ar[d]^-{\cong} \ar[r]^-{\mu_{B_1\times B_2}}
 \ar@/_11em/[dd]_(.2){h\inv\otimes h\inv}^(.2){\simeq}
& B_1\oplus B_2 \ar@{=}[d]
\\
(B_1\!\otimes\! B_1)\oplus(B_1\!\otimes\! B_2)\oplus(B_2\!\otimes\! B_1)\oplus(B_2\!\otimes\! B_2)
 \ar[r]^-{\smallmatrice{\mu_1&0&0&0\\0&0&0&\mu_2}}
 \ar[d]^-{\smallmatrice{k_1\otimes k_1 & k_1\otimes k_2 &k_2\otimes k_1 &k_2\otimes k_2}}
& B_1\oplus B_2
\\
B\otimes B \ar[r]^-{\mu}
& B \ar[u]^-{h=\smallmatrice{h_1\\h_2}}_-{\simeq}
}}
$$
Hence $h:B\isoto B_1\oplus B_2$ is an isomorphism of objects-equipped-with-multiplications. Since $B$ is associative and unital, $B_1$ and $B_2$ must have the same properties.
\end{proof}

\begin{Lem}
\label{lem:triang}%
Let $C$ and $C'$ be ring objects and $\smallmatrice{1&0\\s&\ell}:\unit\times C\isoto \unit\times C'$ a ring isomorphism. Then $s=0$ and $\ell$ is a ring isomorphism.
\end{Lem}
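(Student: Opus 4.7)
The plan is to exploit two structural features of the setup: the multiplication of the product ring $\unit\times C$ vanishes on the ``off-diagonal'' summands $\unit\otimes C$ and $C\otimes\unit$ of $(\unit\oplus C)^{\otimes 2}$ (componentwise multiplication in a product ring), and the matrix of $\varphi:=\smallmatrice{1 & 0 \\ s & \ell}$ is lower triangular. The key step will be to read off a single block of the ring-hom identity
\[\varphi\,\mu = \mu'\,(\varphi\otimes\varphi),\]
where $\mu$ and $\mu'$ denote the multiplications of $\unit\times C$ and $\unit\times C'$ respectively.

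First I would decompose both sides along the four summands $\unit\otimes\unit,\ \unit\otimes C,\ C\otimes\unit,\ C\otimes C$ of the source and the two summands $\unit,\ C'$ of the target, and focus on the block $(\unit\otimes C)\to C'$. On the left, $\mu|_{\unit\otimes C}=0$, so the LHS is zero. On the right, $(\varphi\otimes\varphi)|_{\unit\otimes C}$ has the four components $(0,\,\id\otimes\ell,\,0,\,s\otimes\ell)$ into the summands of $(\unit\oplus C')^{\otimes 2}$, of which only $s\otimes\ell$ survives after applying $\mu'$ and projecting to $C'$. So this single block produces
\[\mu_{C'}\,(s\otimes\ell) = 0 \quad\text{as a map}\quad \unit\otimes C\to C'.\]

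Next, since $\varphi$ is an isomorphism in the additive category with triangular matrix, a short matrix calculation using $\varphi\psi=\id=\psi\varphi$ for its inverse $\psi$ forces $\ell:C\to C'$ to be additively invertible, with some inverse $\ell^{-1}$. Precomposing the displayed equation with $\id_\unit\otimes\ell^{-1}$ replaces $\ell$ by $\id_{C'}$, giving $\mu_{C'}\,(s\otimes\id_{C'})=0$; precomposing once more with $\id_\unit\otimes\eta_{C'}$ and applying the right unit axiom $\mu_{C'}\,(\id_{C'}\otimes\eta_{C'})=\id_{C'}$ collapses the left-hand side to $s$ itself, so $s=0$.

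Once $s=0$, the remaining blocks of the ring-hom identities reduce to $\ell\,\eta_C=\eta_{C'}$ (from the unit diagram) and $\ell\,\mu_C=\mu_{C'}\,(\ell\otimes\ell)$ (from the $C\otimes C$ summand), which are exactly the ring-hom axioms for~$\ell$; combined with $\ell$ being an additive iso this makes $\ell$ a ring isomorphism. The only real work is the bookkeeping needed to extract $\mu_{C'}(s\otimes\ell)=0$ from the multiplicativity diagram; everything else then follows by short formal manipulations.
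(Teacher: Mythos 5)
Your argument is correct and is essentially the same as the paper's: both proofs extract the single equation $\mu_{C'}(s\otimes\ell)=0$ from the $(\unit\otimes C)\to C'$ block of the multiplicativity identity, then kill $s$ by precomposing with $1\otimes\ell^{-1}\eta'$ and invoking the unit axiom, after which $\ell$ is readily seen to preserve unit and multiplication. The only cosmetic difference is that you separate the precomposition into two steps and phrase the block computation slightly more explicitly.
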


\begin{proof}
Let us denote by $(C,\mu,\eta)$ and $(C',\mu',\eta')$ the structures. Clearly $\ell$ is an isomorphism of objects. From the fact that $\smallmatrice{1&0\\s&\ell}$ preserves the structures it follows that $\eta'=s+\ell\eta$ and that $\smallmatrice{1&0&0&0\\0&0&0&\mu'}\big(\smallmatrice{1&0\\s&\ell}\otimes\smallmatrice{1&0\\s&\ell}\big)=\smallmatrice{1&0\\s&\ell}\smallmatrice{1&0&0&0\\0&0&0&\mu}$, giving in particular $\mu'(s\otimes \ell)=0$ and $\mu'\,(\ell\otimes \ell)=\ell\mu$. Composing the former with $(1\otimes \ell\inv\eta'):\unit\otimes \unit\to \unit\otimes C$, we get $0=\mu'(s\otimes \ell)(1\otimes \ell\inv\eta')=\mu'(s\otimes \eta')=\mu'(1_{C'}\otimes \eta')s=s$ and therefore $\ell\eta=\eta'$. Hence $\ell$ preserves multiplication and unit.
\end{proof}

\begin{proof}[Proof of Theorem~\ref{thm:split}]
Via the morphism~$g:B\to A$, we can equip $A$ with a structure of $B,B$-bimodule, so that $g$ becomes $B\env$-linear. Since $B\env=B\otimes B\op$ is separable, the category $B\env\MModcat{K}$ of $B,B$-bimodules is triangulated in such a way that $U_{B\env}:B\env\MModcat{K}\too\cat K$ is exact. Choose a distinguished triangle over~$g$ in $B\env\MModcat{K}$ say $C\to B\overset{g}\to A\overset{z}\to \Sigma C$. Forgetting the $B\env$-action, since $g$ is split by~$f$ in~$\cat K$, we see that $U_{B\env}(z)=0$. Since $U_{B\env}$ is faithful, $z$ is also zero in the triangulated category $B\env\MModcat{K}$, which in turn yields an isomorphism $h:B\isoto A\oplus C$ of $B,B$-bimodules such that $\pr_1\, h=g$. By Lemma~\ref{lem:bimod}, $A$ and $C$ can be equipped with ring structures so that $h$ is a ring isomorphism. We are left to verify that this new ring structure on~$A$ is indeed the original one. This follows from the fact that $g:B\to A$ is a split epimorphism which is a homomorphism from $B$ to $A$ with both structures (the original one by hypothesis and the new one because $h=\smallmatrice{g\\\ast}$ is a homomorphism). Note that $C$ is separable by Remark~\ref{rem:sep-factor}. Finally, for uniqueness of~$C$ as $A$-algebra, with the notation of the statement, we obtain an isomorphism $k:=h'\circ h\inv:\unit\times C\isoto \unit\times C'$ in~$\AK$ such that $\pr_1\, k=\pr_1$ which means that $k$ has the form $\smallmatrice{1&0\\s&\ell}$ and we conclude by Lemma~\ref{lem:triang}.
\end{proof}

\begin{Thm}
\label{thm:A'}%
Let $A$ be a tt-ring in~$\cat K$. Then there exists a ring isomorphism $h:A\otimes A\isoto A\times A'$ for some tt-ring~$A'$ in such a way that $\pr_1\, h=\mu$. Moreover, the $A$-algebra $A'$ is unique up to isomorphism with this property.
\end{Thm}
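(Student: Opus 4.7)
The plan is to reduce this to the Splitting Theorem~\ref{thm:split} by identifying the right homomorphisms $f$ and $g$. Take $B := A \otimes A$, which is a tt-ring by Remark~\ref{rem:sep-factor} (commutativity and separability of $A\otimes A$ follow from those of $A$). As our tt-ring homomorphisms, I would use
\[
f := 1_A \otimes \eta : A \cong A\otimes \unit \too A\otimes A \quadtext{and} g := \mu : A\otimes A \too A.
\]
Here $f$ is a homomorphism since $\eta:\unit\to A$ is one, while $g = \mu$ is a homomorphism precisely because $A$ is commutative (this is the standard fact that multiplication of a commutative monoid is a monoid map). The composite $g\circ f = \mu \circ (1\otimes \eta) = \id_A$ by the unit axiom.

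Theorem~\ref{thm:split} then produces a tt-ring $A'$ and a ring isomorphism $h : A\otimes A \isoto A \times A'$ with $\pr_1\,h = g = \mu$, which is the desired decomposition. For the uniqueness statement, I would appeal directly to the uniqueness clause of Theorem~\ref{thm:split}: given another such $A'$-algebra structure with isomorphism $h' : A\otimes A \isoto A\times A''$ satisfying $\pr_1\,h' = \mu$, the theorem yields an $A$-algebra isomorphism $A'\isoto A''$.

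The only genuinely non-formal point is ensuring that the hypotheses of Theorem~\ref{thm:split} are actually met, which amounts to checking that $\mu$ is a ring map (commutativity of $A$) and that $A \otimes A$ is itself a tt-ring (separability of a tensor product of separable rings, already recorded in Remark~\ref{rem:sep-factor}). Neither presents a real obstacle, so I expect the proof to be very short once these observations are in place.
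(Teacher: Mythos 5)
Your proof is correct and matches the paper's argument exactly: the paper also applies Theorem~\ref{thm:split} to $B=A\otimes A$ with $g=\mu$ and $f=1_A\otimes\eta$, and invokes the uniqueness clause there for the uniqueness of~$A'$. The additional sanity checks you spell out (that $\mu$ is a ring map because $A$ is commutative, that $A\otimes A$ is separable, that $\mu\circ(1\otimes\eta)=\id_A$) are implicit in the paper but correctly identified as the hypotheses that need to hold.
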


\begin{proof}
Apply Theorem~\ref{thm:split} to the tt-ring $B=A\otimes A$ with $g=\mu:A\otimes A\to A$ and $f=1_A\otimes\eta=F_A(\eta):A\to A\otimes A$.
\end{proof}

\begin{Rem}
\label{rem:A'}%
From the isomorphism $\smallmatrice{\mu\\\ast}:\,A\otimes A\isoto A\oplus A'$, we see that $A'\simeq\Sigma\inv\!\cone(\mu)\simeq\cone(1_A\otimes\eta)\simeq A\otimes\cone(\eta)$ in~$\cat K$. Also $\supp(A')\subseteq\supp(A)$.
\end{Rem}

\goodbreak
\section{Splitting tower and degree}
\label{se:deg}%
\medskip

%
\begin{Def}
\label{def:tower}%
We define \emph{the splitting tower} of a tt-ring $A$
$$
A\ppp{0}\to A\ppp{1}\to A\ppp{2}\to\cdots \to A\ppp{n}\to A\ppp{n+1}\to\cdots
$$
as follows\,: We start with $A\ppp{0}=\unit$, $A\ppp{1}=A$ and $\eta:A\ppp{0}\to A\ppp{1}$. Then for $n\geq 1$ we define $A\ppp{n+1}=(A\ppp{n})'$ in the notation of Theorem~\ref{thm:A'} applied to the tt-ring $A\ppp{n}$ in the tt-category $A\ppp{n-1}\MModcat{K}$ (see Remark~\ref{rem:alg}). Equivalently, $A\ppp{n+1}$ is characterized as $A\ppp{n}$-algebra by the existence of an isomorphism of $A\ppp{n}$-algebras
\begin{equation}
\label{eq:An}%
h:A\ppp{n}\otimes_{A\ppp{n-1}}A\ppp{n}\isoto A\ppp{n}\times A\ppp{n+1}
\end{equation}
such that $\pr_1\, h=\mu$. This tower $\{A\ppp{n}\}_{n\geq0}$ is well-defined up to isomorphism.
\end{Def}

\begin{Rem}
\label{rem:n->n+1}%
By Remark~\ref{rem:A'}, $\supp(A\ppp{n+1})\subseteq \supp(A\ppp{n})$, and if $A\ppp{n}=0$ for some~$n$ then $A\ppp{m}=0$ for all $m\geq n$. Also, by construction, if we consider $A\ppp{n}$ as a tt-ring in $A\ppp{n-1}\MModcat{K}$, we have $(A\ppp{n})\ppp{m}\cong A\ppp{n+m-1}$ for all $m\geq 1$.
\end{Rem}

\begin{Def}
\label{def:deg}%
We say that $A$ \emph{has finite degree~$d$} if $A\ppp{d}\neq0$ and $A\ppp{d+1}=0$. In that case, we write $\deg(A)=d$ or $\deg_{\cat K}(A)=d$ if we need to stress the category. If $A\ppp{n}\neq0$ for all~$n\geq0$, we say that $A$ has \emph{infinite degree}.
\end{Def}

\begin{Exa}
For $A=\unit\times \unit$, we have $A\otimes A\simeq A\times A$. Hence $A\ppp{2}=A=A\ppp{1}$. If one was to compute $A\ppp{2}\otimes A\ppp{2}$ one would get $A\ppp{2}\times A$ again and misreading Definition~\ref{def:deg} could lead to the false impression that $A\ppp{3}$ is $A$ again and that all $A\ppp{n}$ are equal. This is not the way to compute $A\ppp{3}$\,! One needs to compute $A\ppp{2}\otimes_{A\ppp{1}}A\ppp{2}=A\otimes_AA=A=A\ppp{2}\times 0$ and therefore $A\ppp{3}=0$. So, the tt-ring $\unit\times \unit$ has degree~2. In~\eqref{eq:An}, it is important to perform the tensor \textbf{over}~$\mathbf{A\ppp{n-1}}$.
\end{Exa}

An immediate gain of having a numerical invariant like the degree is the possibility of making proofs by induction. This will be applied in~\cite{Balmer13ppb} using the Splitting Theorem~\ref{thm:A'}, under the following form\,:
\begin{Thm}
\label{thm:ind-deg}%
Let $A$ be a tt-ring of finite degree~$d$ in a tt-category~$\cat K$. Then we have a ring isomorphism $F_A(A)\simeq\unit_A\times A\ppp{2}$ and $\deg(A\ppp{2})=d-1$ in~$\AK$.
\end{Thm}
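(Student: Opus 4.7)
The plan is to read the theorem as two separate observations that follow essentially directly from the construction of the splitting tower (Definition~\ref{def:tower}) and the dictionary of Remark~\ref{rem:alg}, plus the index-shift in Remark~\ref{rem:n->n+1}.

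For the ring isomorphism $F_A(A) \simeq \unit_A \times A\ppp{2}$ in $\AK$, I would start by applying Theorem~\ref{thm:A'} to the tt-ring $A$ in $\cat K$ itself (this is the $n=1$ instance of Definition~\ref{def:tower}, since $A\ppp{0}\MModcat K=\cat K$). That yields a ring isomorphism $h : A\otimes A \isoto A\times A\ppp{2}$ in $\cat K$ with $\pr_1\,h=\mu$. The key point is that $h$ is not merely a ring isomorphism in $\cat K$ but is an isomorphism of $A$-algebras, where $A\otimes A$ carries the $A$-algebra structure $F_A(\eta)=1_A\otimes\eta$ and $A\times A\ppp{2}$ carries the product $A$-algebra structure (the first factor is $A$ over itself via the identity, the second is $A\ppp{2}$ as constructed in Theorem~\ref{thm:A'}). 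I would verify the first-factor compatibility by computing $\pr_1\,h\,(1_A\otimes\eta)=\mu\,(1_A\otimes\eta)=1_A$. Now the correspondence of Remark~\ref{rem:alg} translates $A$-algebras in $\cat K$ into tt-rings in $\AK$; under this dictionary the $A$-algebra $F_A(A)=A\otimes A$ corresponds to the tt-ring $F_A(A)$ in $\AK$, the $A$-algebra $A$ (identity) corresponds to $\unit_A$, and $A\ppp{2}$ corresponds to its namesake tt-ring in $\AK$. So $h$ delivers the claimed isomorphism in $\AK$.

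For the degree statement $\deg_{\AK}(A\ppp{2})=d-1$, I would apply Remark~\ref{rem:n->n+1} with $n=2$, which says that if we regard $A\ppp{2}$ as a tt-ring in $A\ppp{1}\MModcat K=\AK$ then
\[
(A\ppp{2})\ppp{m} \;\cong\; A\ppp{m+1} \qquad \text{for all } m\geq 1.
\]
Plugging in the hypothesis $A\ppp{d}\neq 0$ and $A\ppp{d+1}=0$ gives $(A\ppp{2})\ppp{d-1}\neq 0$ and $(A\ppp{2})\ppp{d}=0$, which is exactly the assertion $\deg_{\AK}(A\ppp{2})=d-1$.

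There is no substantial obstacle; the only mildly delicate point is making sure the $A$-algebra bookkeeping at the step ``$h$ is an $A$-algebra isomorphism'' is clean, so that the translation through Remark~\ref{rem:alg} lands on the right tt-rings in $\AK$. Everything else is a direct invocation of Theorem~\ref{thm:A'} and Remark~\ref{rem:n->n+1}.
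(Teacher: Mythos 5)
Your proof is correct and follows essentially the same route as the paper's: invoke Theorem~\ref{thm:A'} together with the dictionary of Remark~\ref{rem:alg} to get the ring isomorphism in $\AK$, and then use the index shift $(A\ppp{2})\ppp{m}\cong A\ppp{m+1}$ from Remark~\ref{rem:n->n+1} to deduce the degree drop. The extra bookkeeping you do to confirm that $h$ is an $A$-algebra isomorphism (rather than merely a ring isomorphism in $\cat K$) is exactly the ``see Remark~\ref{rem:alg} if necessary'' step that the paper leaves implicit.
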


\begin{proof}
Since $A\ppp{2}=A'$, this is simply Theorem~\ref{thm:A'} with $A$-algebras replaced by tt-rings in~$\AK$ (see Remark~\ref{rem:alg} if necessary), together with the observation that $(A\ppp{2})\ppp{n}=A\ppp{n+1}$ for all~$n\geq 1$, which gives $\deg_{\AK}(A\ppp{2})=\deg_{\cat K}(A)-1$.
\end{proof}

Before showing in Section~\ref{se:exas} that many tt-rings have finite degree, let us build our understanding of this $\deg(A)\in\bbN\cup\{\infty\}$, starting with functorial properties.
\goodbreak

\begin{Thm}
\label{thm:nat}%
Let $A$ be a tt-ring in~$\cat K$.
\begin{enumerate}[\rm(a)]
\item
\label{it:nat-1}%
Let $F:\cat K\to \cat L$ be a tt-functor. Then for every~$n\geq 0$, we have $F(A)\ppp{n}\simeq F(A\ppp{n})$ as tt-rings. In particular, $\deg(F(A))\leq \deg(A)$.
\smallbreak
\item
\label{it:nat-2}%
Let $F:\cat K\to \cat L$ be a tt-functor. Suppose that $F$ is ``\emph{weakly conservative on~$\supp(A)$}", \ie for $x\in\cat K_{\supp(A)}$ if $F(x)=0$ then $x\potimes{m}=0$ for $m\geq0$; for instance, if $F$ is just conservative. Then $\deg(F(A))=\deg(A)$.
\smallbreak
\item
\label{it:nat-3}%
Suppose that $B\in\cat K$ is a tt-ring such that $\supp(B)\supseteq\supp(A)$; for instance, if $\supp(B)=\SpcK$. Then $\deg(A)$ is equal to the degree of~$F_B(A)$ in~$B\MModcat{K}$.
\smallbreak
\item
\label{it:nat-4}%
Suppose $\cat K$ is local and that $B\in\cat K$ is a non-zero tt-ring. Then the degree of~$A$ in $\cat K$ is equal to the degree of~$F_B(A)$ in~$B\MModcat{K}$.
\end{enumerate}
\end{Thm}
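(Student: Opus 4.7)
The four parts cascade naturally: (a) compares the splitting towers of $A$ and $F(A)$, while (b)--(d) promote the resulting inequality to equality under progressively concrete conservativity hypotheses. For (a), I plan to induct on~$n\geq 0$, establishing $F(A\ppp{n})\simeq F(A)\ppp{n}$ as tt-rings, compatibly with the tower maps. The cases $n=0,1$ are immediate from $F(\unit)\simeq\unit$. For the inductive step, Remark~\ref{rem:F-Mod} applied to the tt-ring $A\ppp{n-1}$ supplies a tt-functor $\bar F\colon A\ppp{n-1}\MModcat{K}\to F(A\ppp{n-1})\MModcat{L}$ whose target, by induction, equals $F(A)\ppp{n-1}\MModcat{L}$. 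Applying $\bar F$ to the characterizing isomorphism~\eqref{eq:An} defining $A\ppp{n+1}$ yields a splitting
\[
F(A)\ppp{n}\otimes_{F(A)\ppp{n-1}}F(A)\ppp{n}\isoto F(A)\ppp{n}\times \bar F(A\ppp{n+1})
\]
whose first projection is the multiplication, and the uniqueness clause of Theorem~\ref{thm:A'} identifies $\bar F(A\ppp{n+1})\simeq F(A)\ppp{n+1}$ as $F(A)\ppp{n}$-algebras. Forgetting via $U_{F(A)\ppp{n-1}}$ (using $U_B\bar F=FU_A$ from Remark~\ref{rem:F-Mod}) then gives $F(A\ppp{n+1})\simeq F(A)\ppp{n+1}$ in~$\cat L$; the inequality $\deg(F(A))\leq\deg(A)$ is immediate from $F(0)=0$.

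For (b), I argue by contradiction. If $\deg(F(A))<\deg(A)$, pick $n$ with $A\ppp{n}\neq 0$ but $F(A\ppp{n})\simeq F(A)\ppp{n}=0$. By Remarks~\ref{rem:A'} and~\ref{rem:n->n+1} we have $\supp(A\ppp{n})\subseteq\supp(A)$, so weak conservativity delivers $(A\ppp{n})\potimes{m}=0$ for some~$m\geq 1$. The critical observation is that any ring object is a retract of each of its tensor powers: the unit axiom $\mu\circ(\eta\otimes 1)=1$ exhibits $A\ppp{n}$ as a retract of $(A\ppp{n})\potimes{2}$, and iterating, of $(A\ppp{n})\potimes{m}$. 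Hence $A\ppp{n}=0$, contradicting the choice of~$n$.

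For (c), I apply (b) to $F=F_B\colon\cat K\to B\MModcat{K}$ and verify weak conservativity on~$\supp(A)$. Given $x\in\cat K_{\supp(A)}$ with $F_B(x)=B\otimes x=0$, the tensor-product formula for supports, together with $\supp(x)\subseteq\supp(A)\subseteq\supp(B)$, yields $\emptyset=\supp(B\otimes x)=\supp(B)\cap\supp(x)=\supp(x)$, whence $x$ is $\otimes$-nilpotent by the standard characterization of empty support in~$\SpcK$. Part (d) is the special case: localness of~$\cat K$ means $(0)$ is a prime $\otimes$-ideal, so $B\otimes x=0$ together with $B\neq 0$ already forces $x=0$, making $F_B$ outright conservative, and (b) applies.

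The main obstacle I anticipate is the bookkeeping in part~(a): one must unwind the identifications from Remarks~\ref{rem:alg} and~\ref{rem:F-Mod} carefully to ensure that the image of~\eqref{eq:An} under~$\bar F$ genuinely satisfies the hypothesis of the uniqueness statement in Theorem~\ref{thm:A'} inside the tt-category~$F(A)\ppp{n-1}\MModcat{L}$, and that the successive identifications $\bar F(A\ppp{k})\simeq F(A)\ppp{k}$ are compatible with the tower maps being ring homomorphisms rather than mere isomorphisms of underlying objects.
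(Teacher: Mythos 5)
Your proposal follows the paper's proof essentially step for step: part (a) by applying a tt-functor to the characterizing isomorphism~\eqref{eq:An} and invoking the uniqueness in Theorem~\ref{thm:A'}, part (b) via the key observation that a ring object is a retract of its $\otimes$-powers through the unit, and parts (c), (d) by reducing to (b) through a support computation. The extra detail you supply for (a) (Remark~\ref{rem:F-Mod}, the identity $U_B\bar F=FU_A$, and the compatibility checks) is exactly what the paper's terse ``simply apply $F$ to~\eqref{eq:An}'' unpacks to.

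There is one small but real inaccuracy in part (d). You write that ``localness of~$\cat K$ means $(0)$ is a prime $\otimes$-ideal, so $B\otimes x=0$ together with $B\neq 0$ already forces $x=0$, making $F_B$ outright conservative.'' That is not the definition being used: as the paper recalls, $\cat K$ local means $x\otimes y=0$ forces $x$ or $y$ to be $\otimes$-\emph{nilpotent}, not zero. (Your stronger reading would say $(0)$ is prime; in a non-rigid tt-category these are genuinely different, and the theorem does not assume rigidity.) From localness and $B\neq 0$ (hence $B$ not $\otimes$-nilpotent, being a retract of its $\otimes$-powers) one gets only that $x$ is $\otimes$-nilpotent, i.e.\ that $F_B$ is \emph{weakly} conservative on all of~$\cat K$. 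Fortunately that is exactly what part (b) requires, so your conclusion stands, but the intermediate claim of full conservativity should be dropped.
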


\begin{proof}
To prove~\eqref{it:nat-1} by induction on~$n$, simply apply $F$ to~\eqref{eq:An}, which characterizes the splitting tower~$A\ppp{\ast}$. So, if $\deg(A)<\infty$ then $F(A)\ppp{\deg(A)+1}\simeq F(A\ppp{\deg(A)+1})=0$ and $\deg(F(A))\leq \deg(A)$. For~\eqref{it:nat-2}, recall that $A\ppp{n}\in\cat K_{\supp(A)}$ for every $n\geq1$ (Rem.\,\ref{rem:n->n+1}). As $\deg(F(A))=:d<\infty$ implies $F(A\ppp{d+1})\simeq F(A)\ppp{d+1}=0$ we get by weak-conservativity of~$F$ that $A\ppp{d+1}$ is $\otimes$-nilpotent, hence zero (every ring object is a direct summand of its $\otimes$-powers via the unit). This $A\ppp{d+1}=0$ means $\deg(A)\leq d=\deg(F(A))$, which finishes~\eqref{it:nat-2}. Then~\eqref{it:nat-3} follows since $\supp(B)\supseteq\supp(A)$ implies that $F_B:\cat K\to B\MModcat{K}$ is weakly conservative on~$\cat K_{\supp(A)}$. Indeed, if $\supp(x)\subseteq\supp(A)$ and $F_B(x)=0$ then $B\otimes x=U_B F_B(x)=0$ and $\varnothing=\supp(B\otimes x)=\supp(B)\cap \supp(x)=\supp(x)$, which implies that $x$ is $\otimes$-nilpotent. For~\eqref{it:nat-4}, recall that a tt-category is \emph{local} if $x\otimes y=0$ implies that $x$ or $y$ is $\otimes$-nilpotent. Hence for the non-zero tt-ring~$B$, the functor $F_B:\cat K\to B\MModcat{K}$ is weakly conservative on the whole of~$\cat K$ and we can apply~\eqref{it:nat-2}. \end{proof}

Let us now describe the local nature of the degree. Recall that for every prime $\cat P\in\SpcK$, the local category $\KP=(\cat K/\cat P)^\natural$ at~$\cat P$ is the idempotent completion of the Verdier quotient $\cat K/\cat P$, hence comes with a tt-functor $\qp:\cat K\onto \cat K/\cat P\hook\KP$.

\begin{Thm}
\label{thm:local}%
Let $A$ be a tt-ring in~$\cat K$. Suppose that $\qp(A)$ has finite degree in the local tt-category $\KP$ for every point $\cat P\in\SpcK$. Then $A$ has finite degree and
$$
\deg(A)=\max_{\cat P\in\SpcK}\deg(\qp(A))=\max_{\cat P\in\supp(A)}\deg(\qp(A))\,.
$$
\end{Thm}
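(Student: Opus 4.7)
The plan is to leverage Theorem~\ref{thm:nat}(a) to compare the splitting tower of $A$ with that of its localizations, and to translate the vanishing of terms in the tower into a support-theoretic statement that can be controlled by the quasi-compactness of $\SpcK$.

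First, I would apply Theorem~\ref{thm:nat}(a) to the tt-functor $\qp:\cat K\to \KP$ for each prime~$\cat P$, obtaining isomorphisms $\qp(A\ppp{n})\simeq \qp(A)\ppp{n}$ and hence $\deg(\qp(A))\leq\deg(A)$ for every~$\cat P$. Setting $d:=\sup_{\cat P}\deg(\qp(A))$ (a priori possibly infinite), this already gives the easy inequality $d\leq\deg(A)$. The real work is the reverse direction, namely producing an explicit $n\leq d+1$ with $A\ppp{n}=0$, which in particular forces the supremum to be finite.

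The crucial lemma I would invoke is that for any ring object $R$ in an essentially small tt-category, $R=0$ iff $\supp(R)=\varnothing$: indeed, empty support implies $\otimes$-nilpotence by the standard Zorn argument producing a prime disjoint from $\{R\potimes{m}\}_{m\geq 1}$ whenever $R$ is not $\otimes$-nilpotent, and a ring object is a direct summand of each of its tensor powers via the unit $\eta:\unit\to R$, so nilpotence then forces $R=0$. Applied to $A\ppp{n}$, the support $\supp(A\ppp{n})$ equals $\{\cat P\mid \qp(A)\ppp{n}\neq 0\}=\{\cat P\mid \deg(\qp(A))\geq n\}$. These closed subsets form a decreasing chain in $\SpcK$ (by Remark~\ref{rem:n->n+1}) whose intersection is empty precisely by the hypothesis that every $\deg(\qp(A))$ is finite.

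Now I would invoke that $\SpcK$ is a spectral, hence quasi-compact, space: taking complements of a nested decreasing chain of closed sets with empty intersection yields an increasing open cover of $\SpcK$, which must stabilize after finitely many steps. Hence some $\supp(A\ppp{N})=\varnothing$, so $A\ppp{N}=0$ by the lemma; this gives $\deg(A)\leq N-1<\infty$, and combined with the first step we conclude $\deg(A)=d$. The equality of the max over $\SpcK$ with the max over $\supp(A)$ is immediate since $\qp(A)=0$, and hence $\deg(\qp(A))=0$, for $\cat P\notin\supp(A)$ (the case $A=0$ being trivial). The main obstacle is precisely this quasi-compactness step: without it, pointwise finite degrees would not force globally bounded degrees, and the theorem's assertion that $A$ has finite degree would fail.
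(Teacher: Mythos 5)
Your proof is correct and follows essentially the same route as the paper: Theorem~\ref{thm:nat}\,(a) gives $\qp(A\ppp{n})\simeq\qp(A)\ppp{n}$, quasi-compactness of $\SpcK$ applied to the nested opens $\SpcK\setminus\supp(A\ppp{n})$ forces $\supp(A\ppp{N})=\varnothing$ for some $N$, and the observation that a ring object is a retract of each of its tensor powers upgrades $\otimes$-nilpotence to vanishing, yielding $A\ppp{N}=0$. One small point deserves to be spelled out: from $d\leq\deg(A)\leq N-1<\infty$ alone the equality $\deg(A)=d$ does not follow, so ``combined with the first step we conclude $\deg(A)=d$'' skips a beat. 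What closes the gap is to apply your own identification $\supp(A\ppp{n})=\{\cat P:\deg(\qp(A))\geq n\}$ once more at $n=\deg(A)$: since $A\ppp{\deg(A)}\neq0$, its support is nonempty, which produces a prime $\cat P\in\supp(A\ppp{\deg(A)})\subseteq\supp(A)$ with $\deg(\qp(A))\geq\deg(A)$, hence $d\geq\deg(A)$. This is precisely the final sentence of the paper's proof, and it also gives the second displayed equality directly; you have all the ingredients, just make the last inequality explicit rather than leaning on ``combined with.''
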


\begin{proof}
There exists, for every $\cat P\in \SpcK$, an integer $n_{\cat P}\geq 1$ such that $\qp(A\ppp{n_{\cat P}})=(\qp(A))\ppp{n_{\cat P}}=0$. Hence $\cat P$ belongs to the open $\cat U(A\ppp{n_{\cat P}}):=\SpcK-\supp(A\ppp{n_{\cat P}})$. Putting all those open subsets together, we cover $\SpcK$. But the spectrum is always quasi-compact and $\cat U(A\ppp{n})\subseteq \cat U(A\ppp{n+1})$, hence there exists $n\geq0$ such that $\cat U(A\ppp{n})=\SpcK$. This means $A\ppp{n}=0$, \ie $d:=\deg(A)<\infty$. By Theorem~\ref{thm:nat}\,\eqref{it:nat-1} we have $d=\deg(A)\geq\max_{\cat P\in\SpcK}\deg(\qp(A))\geq\max_{\cat P\in\supp(A)}\deg(\qp(A))$. Since $A\ppp{d}\neq0$ there exists $\cat P\in\supp(A\ppp{d})\subseteq\supp(A)$ with $0\neq\qp(A\ppp{d})\simeq (\qp(A))\ppp{d}$ hence $\deg(\qp(A))\geq d=\deg(A)$, wrapping up all above inequalities into equalities.
\end{proof}

We now discuss the link between the degree and the trivial tt-ring~$\unit$.

\begin{Thm}
\label{thm:1}%
Let $A$ be a tt-ring in~$\cat K$. Suppose $\cat K\neq0$.
\begin{enumerate}[\rm(a)]
\smallbreak
\item
\label{it:1-0}%
For every $n\geq1$, we have $\deg(\unit\ptimes{n})=n$.
\smallbreak
\item
\label{it:1-1}%
For every $n\geq1$ we have $F_{A\ppp{n}}(A)\simeq \unit\ptimes{ n}\times A\ppp{n+1}$ as tt-rings in $A\ppp{n}\MModcat{K}$.
\smallbreak
\item
\label{it:1-2}%
If $\deg(A)<\infty$ then $B:=A\ppp{\deg(A)}$ is non-zero and we have in $B\MModcat{K}$
\begin{equation}
\label{eq:nat-1}%
F_B(A)\simeq \unit\ptimes{ \deg(A)}\,.
\end{equation}
\smallbreak
\item
\label{it:1-3}%
If a tt-functor $F:\cat K\to \cat L$ is weakly conservative on~$\cat K_{\supp(A)}$, see Thm.\,\ref{thm:nat}\,\eqref{it:nat-2}, \eg\ if $F$ is conservative, and if $F(A)\simeq \unit\ptimes{ d}$ in~$\cat L$ then $\deg(A)=d$.
\smallbreak
\item
\label{it:1-4}%
Let $B$ be a tt-ring such that $F_B(A)\simeq \unit\ptimes{ d}$ as tt-rings in $B\MModcat{K}$. Suppose either that $\supp(B)\supseteq\supp(A)$, or that $\cat K$ is local and $B\neq0$. Then $d=\deg(A)$.
\end{enumerate}
\end{Thm}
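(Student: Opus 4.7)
The plan is to prove (b) first and then derive everything else from it combined with Theorem~\ref{thm:nat}. For (b), I would induct on $n \geq 1$. The base case $n=1$ is precisely the isomorphism of Theorem~\ref{thm:ind-deg}. For the inductive step, use the tower homomorphism $h\colon A\ppp{n} \to A\ppp{n+1}$ of Definition~\ref{def:tower} and the factorization $F_{A\ppp{n+1}} \cong F_h \circ F_{A\ppp{n}}$ recorded in Remark~\ref{rem:AB}. Applying the tt-functor $F_h$ to the inductive isomorphism $F_{A\ppp{n}}(A) \simeq \unit\ptimes{n} \times A\ppp{n+1}$ yields $F_{A\ppp{n+1}}(A) \simeq \unit\ptimes{n} \times F_h(A\ppp{n+1})$, since tt-functors preserve finite products and the unit. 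The remaining factor $F_h(A\ppp{n+1}) = A\ppp{n+1} \otimes_{A\ppp{n}} A\ppp{n+1}$ is computed by the very defining isomorphism~\eqref{eq:An} of the tower, which in $A\ppp{n+1}\MModcat{K}$ reads $\unit \times A\ppp{n+2}$; assembling the pieces gives $\unit\ptimes{n+1} \times A\ppp{n+2}$.

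Part~(c) then follows from (b) by specializing $n = d := \deg(A)$ and using $A\ppp{d+1} = 0$. For~(a), I would induct on $n \geq 1$: the base $n=1$ comes from $\unit \otimes \unit = \unit$, which via Theorem~\ref{thm:A'} forces $\unit\ppp{2} = 0$ and hence $\deg(\unit) = 1$ (since $\cat K \neq 0$). For the step, set $A = \unit\ptimes{(n+1)}$ and exploit the standard equivalence $\unit\ptimes{(n+1)}\MModcat{K} \simeq \cat K\ptimes{(n+1)}$ coming from the $n+1$ orthogonal idempotents of $A$. A component-wise unravelling of $A \otimes A \simeq A \times A\ppp{2}$ identifies $A\ppp{2}$ with $\unit\ptimes{n}$ as a tt-ring in the non-zero tt-category $\cat K\ptimes{(n+1)}$. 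The inductive hypothesis applied inside $\cat K\ptimes{(n+1)}$ gives $\deg(A\ppp{2}) = n$, and the identity $A\ppp{m+1} \cong (A\ppp{2})\ppp{m}$ from Remark~\ref{rem:n->n+1} translates this into $\deg(A) = n+1$.

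Finally, (d) and (e) are one-line reductions to Theorem~\ref{thm:nat} and~(a). For (d), Theorem~\ref{thm:nat}\,\eqref{it:nat-2} yields $\deg(F(A)) = \deg(A)$, and (a) evaluates $\deg(F(A)) = \deg(\unit\ptimes{d}) = d$; the degenerate case $d=0$ is handled separately, since then $F(A) = 0$ forces $A$ to be $\otimes$-nilpotent and hence zero by weak conservativity. For (e), Theorem~\ref{thm:nat}\,\eqref{it:nat-3} or~\eqref{it:nat-4} (according to which hypothesis holds) gives $\deg_{\cat K}(A) = \deg_{\BK}(F_B(A))$, and (a) finishes as above. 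The main subtlety I anticipate lies in step~(a): one must genuinely exploit the $\unit\ptimes{m}$-module equivalence and verify that $A\ppp{2}$ inherits precisely the ring structure $\unit\ptimes{n}$ component-wise, not merely match the underlying object.
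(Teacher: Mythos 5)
Your proposal is correct, and for parts (b), (c), (d), (e) it follows essentially the same route as the paper: (b) by induction applying $F_h$ to~\eqref{eq:An}, (c) by setting $n=\deg(A)$, and (d), (e) by combining (a) with the appropriate clause of Theorem~\ref{thm:nat}. One small citation slip: for the base case of (b) you invoke Theorem~\ref{thm:ind-deg}, which requires $\deg(A)<\infty$, whereas part (b) carries no such hypothesis; the isomorphism $F_A(A)\simeq\unit_A\times A\ppp{2}$ you want is just Theorem~\ref{thm:A'} read in $\AK$, no finiteness needed.

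Where you genuinely diverge from the paper is part~(a). The paper reduces it to an explicit additive lemma (Lemma~\ref{lem:1n'}) and warns that the ``naive proof (with a permutation) fails''; it constructs a specific $A$-algebra isomorphism $A\otimes A\isoto A\times A\ptimes{(n-1)}$ using a diagonal trick to force $A$-linearity and the correct $A$-algebra structure on the complementary factor. You instead pass through the Morita-type tt-equivalence $\unit\ptimes{m}\MModcat{K}\simeq\cat K\ptimes{m}$ and argue component by component that the complementary summand of $\mu$ is $\unit\ptimes{(m-1)}$ as a tt-ring in each factor. This sidesteps the paper's warned pitfall rather than confronting it: the identification of the complement with $\unit\ptimes{(m-1)}$ uses a \emph{different} bijection of the $\unit$-factors in each component of $\cat K\ptimes{m}$ (in component $i$ the omitted copy is $\unit_i$), but since the tt-ring $\unit\ptimes{(m-1)}$ is symmetric under permuting factors this is harmless, and the resulting object of $\cat K\ptimes{m}$ is indeed $\unit_{\cat L}\ptimes{(m-1)}$ as a tt-ring. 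You flag exactly this subtlety at the end, which is the right thing to check. So your route to (a) is valid and arguably more structural, trading the paper's explicit combinatorial map for the (standard, but worth stating) fact that the idempotent splitting $\unit\ptimes{m}\MModcat{K}\simeq\cat K\ptimes{m}$ is a tt-equivalence with the diagonal as $F_A$ and component-wise $\otimes$ as $\otimes_A$.
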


We need another additive lemma, whose naive proof (with a permutation) fails.
\begin{Lem}
\label{lem:1n'}%
Let $A=\unit\ptimes{n}$. Then there exists an isomorphism $h:F_A(A)=A\otimes A\isoto A\times A\ptimes{(n-1)}$ of $A$-algebras such that $\pr_1h=\mu$.
\end{Lem}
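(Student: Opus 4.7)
The plan is to exploit the orthogonal idempotent decomposition of $A = \unit\ptimes{n}$. Write $A = \bigoplus_{i=1}^n \unit_i$, with central orthogonal idempotents $e_1, \ldots, e_n$ in $A$ satisfying $e_i e_j = \delta_{ij}\, e_i$ and $\sum_i e_i = 1_A$. Setting $\epsilon_{ij} := e_i \otimes e_j$ then gives $n^2$ central orthogonal idempotents in the commutative ring $A \otimes A$, summing to $1_{A \otimes A}$; each corresponds to a summand $\unit_{ij} \cong \unit$, and $A \otimes A$ is their product as rings in $\cat K$.

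The crucial step is to group these indices along the diagonal. Let $\delta := \sum_i \epsilon_{ii}$ and $\delta' := 1 - \delta$. Both are central idempotents, yielding a ring decomposition $A \otimes A \cong \delta(A\otimes A) \times \delta'(A\otimes A)$. A direct computation shows $\mu(\epsilon_{ij}) = \delta_{ij}\, e_i$, so $\mu$ kills $\delta'(A\otimes A)$ and restricts to a ring isomorphism $\delta(A\otimes A) \isoto A$ matching $\unit_{ii}$ with $\unit_i \subset A$. Under this identification, the first projection of the product is automatically $\mu$, as required.

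It remains to identify $\delta'(A\otimes A)$ with $A\ptimes{(n-1)}$ as $A$-algebras. Both are products of $n(n-1)$ copies of $\unit$ equipped with an $A$-action; on the left, $e_i \cdot \delta'(A\otimes A) = \bigoplus_{j \neq i} \unit_{ij}$, whereas on the right the diagonal $A$-action on $A\ptimes{(n-1)}$ selects the $i$-th atomic summand in each of the $n-1$ copies of $A$. Either ``slice'' has $n-1$ copies of $\unit$, and for each $i$ any bijection $\{j : j \neq i\} \isoto \{1,\ldots,n-1\}$ assembles into a ring isomorphism, automatically $A$-linear because the $A$-action on both sides factors through the first index. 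The subtlety alluded to by the text's remark that a ``naive permutation'' fails is that the $A$-algebra structure on $A\otimes A$, coming from $1_A\otimes\eta$, is asymmetric in the two tensor factors, so one cannot simply match the $n^2$ atomic idempotents bijectively on both sides: the diagonal $\{(i,i)\}$ must be singled out as the $A$-factor (precisely where $\mu$ survives) and the off-diagonal pairs grouped by their first coordinate, which is exactly what the decomposition via $\delta, \delta'$ accomplishes.
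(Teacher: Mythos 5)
Your construction of $h$ is correct, and it is \emph{not} the same map as the one written in the paper's proof. The paper's $h$ sends $\unit_i\otimes\unit_j$ identically to $\unit_{ij}$ for $i\neq j$, $j\leq n-1$, but sends $\unit_i\otimes\unit_n$ \emph{diagonally} into $\oplus_{\ell\neq i}\unit_{i\ell}$ — so it is emphatically not a bijection between atomic summands. Your $h$, by contrast, is exactly a first-coordinate-preserving bijection of the $n^2$ atomic idempotents: $\epsilon_{ii}\mapsto e_i$ in the first $A$-factor, and for each $i$ an arbitrary bijection $\{j:j\neq i\}\isoto\{1,\dots,n-1\}$ on the off-diagonal slice. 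Such a map is automatically a ring isomorphism (it permutes a complete system of orthogonal idempotents), is $A$-linear because both $A$-actions are recorded by the first index, and — since it is a ring map — $A$-linearity already forces it to commute with the unit maps $1\otimes\eta$ and the diagonal, so it is an $A$-algebra isomorphism; $\pr_1 h=\mu$ holds by construction since $\mu(\epsilon_{ij})=\delta_{ij}e_i$. So your route is a clean, genuinely different (and arguably more transparent) argument than the one in the text.

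Where you go astray is the closing paragraph. You write that ``one cannot simply match the $n^2$ atomic idempotents bijectively on both sides,'' but your own $h$ does precisely that; the constraint is only that the bijection must (i) send the diagonal pairs $(i,i)$ into the $A$-factor so that $\pr_1 h=\mu$, and (ii) preserve the first index so that $h$ is $A$-linear. That is a restriction on \emph{which} permutation one may use, not an obstruction to using a permutation at all. The cryptic remark preceding the lemma (``whose naive proof with a permutation fails'') is not well explained by your gloss, and the explanation you offer in fact contradicts the body of your proof. If you keep the final paragraph, replace the claim that a bijective matching is impossible with the correct point: an \emph{arbitrary} or ``symmetric'' permutation of the $n^2$ idempotents need not be $A$-linear nor need it satisfy $\pr_1 h=\mu$; the bijection must be chosen slice-by-slice along the first tensor factor, as you did.
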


\begin{proof}
To keep track of the various copies of~$\unit$, write $A=\oplus_{i=1}^n\unit_i$ and $A\ptimes{(n-1)}=\oplus_{\ell=1}^{n-1}\oplus_{i=1}^n\unit_{i\ell}$ where $\unit_i=\unit_{i\ell}=\unit$ for all~$i,\ell$.
Then $A\otimes A=\oplus_{i,j}\unit_{i}\otimes \unit_j$. Define $h$ by mapping the summand $\unit_i\otimes \unit_i=\unit$ identically to $\unit_{i}\hook A$ and $\unit_i\otimes \unit_j=\unit$ identically to $\unit_{ij}\hook A\ptimes{(n-1)}$ when $i\neq j$ and $j\leq n-1$ but mapping $\unit_i\otimes \unit_n=\unit$ \emph{diagonally} to $\oplus_{\ell=1,\,\ell\neq i}^{n-1}\unit_{i\ell}\hook A\ptimes{(n-1)}$ for all $i<n$. Verifications are now an exercise.
\end{proof}

\begin{proof}[Proof of Theorem~\ref{thm:1}]
We prove~\eqref{it:1-0} by induction on~$n$. The result is clear for $n=1$. If $A=\unit\ptimes{n}$ for $n\geq 2$ then Lemma~\ref{lem:1n'} gives $A\ppp{2}\simeq A\ptimes{(n-1)}\simeq\unit_{A}\ptimes{(n-1)}$ in $A\MModcat{K}$. By induction hypothesis applied to the tt-category $A\MModcat{K}$ we get $\deg(A\ppp{2})=n-1$ hence the result by definition of the degree. For~\eqref{it:1-1}, we need to prove that there are $A\ppp{n}$-algebra isomorphisms $A\ppp{n}\otimes A\simeq A\ppp{n}\times \cdots \times A\ppp{n}\times A\ppp{n+1}$ (with $n$ factors~$A\ppp{n}$). This is an easy induction on~$n$, applying $A\ppp{n+1}\otimes_{A\ppp{n}}-$ and using~\eqref{eq:An} at each stage. Formula~\eqref{eq:nat-1} follows since $A\ppp{\deg(A)+1}=0$. Parts~\eqref{it:1-3} and~\eqref{it:1-4} follow from~\eqref{it:1-0} and Theorem~\ref{thm:nat}\,\eqref{it:nat-2}-\eqref{it:nat-4}.
\end{proof}

\begin{Cor}
\label{cor:AB}%
Suppose that $\cat K$ is local and that $A,B\in\cat K$ are two tt-rings of finite degree. Then $A\times B$ and $A\otimes B$ have finite degree with $\deg(A\times B)=\deg(A)+\deg(B)$ and $\deg(A\otimes B)=\deg(A)\cdot\deg(B)$.
\end{Cor}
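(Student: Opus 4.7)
The plan is to extend scalars to a single non-zero tt-ring over which both $A$ and $B$ become trivial in the sense of Theorem~\ref{thm:1}\,\eqref{it:1-2}, and then read off the degree of $A\times B$ and $A\otimes B$ from Theorem~\ref{thm:1}\,\eqref{it:1-0}, combined with the conservativity statement of Theorem~\ref{thm:1}\,\eqref{it:1-4}.

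First, I would dispatch the trivial cases $A=0$ or $B=0$ by hand. Otherwise, set $d:=\deg(A)$, $e:=\deg(B)$, $C:=A\ppp{d}$, $D:=B\ppp{e}$, and $E:=C\otimes D$. By Theorem~\ref{thm:1}\,\eqref{it:1-2} both $C$ and $D$ are non-zero tt-rings in~$\cat K$. Here the key point is that $E$ is itself non-zero: since $\cat K$ is local, if $E=C\otimes D=0$ then either $C$ or $D$ is $\otimes$-nilpotent, but every non-zero ring object is a direct summand of each of its $\otimes$-powers via the unit, so neither $C$ nor $D$ can be $\otimes$-nilpotent. Hence $E\neq 0$, and Theorem~\ref{thm:nat}\,\eqref{it:nat-4} reduces the computation of $\deg_{\cat K}(A\times B)$ and $\deg_{\cat K}(A\otimes B)$ to computing the corresponding degrees of $F_E(A\times B)$ and $F_E(A\otimes B)$ in~$E\MModcat{K}$.

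Next, I would verify that $F_E(A)\simeq\unit\ptimes{d}$ and $F_E(B)\simeq\unit\ptimes{e}$ as tt-rings in~$E\MModcat{K}$. For~$A$, Theorem~\ref{thm:1}\,\eqref{it:1-2} gives $F_C(A)\simeq\unit\ptimes{d}$ as tt-rings in~$C\MModcat{K}$; now the canonical tt-ring homomorphism $C=C\otimes\unit\to C\otimes D=E$ (given by $1\otimes\eta$) and Remark~\ref{rem:AB} provide a tt-functor $C\MModcat{K}\to E\MModcat{K}$ whose composition with $F_C$ is (isomorphic to)~$F_E$, and this functor sends $\unit$ to $\unit$ and preserves products; hence $F_E(A)\simeq\unit\ptimes{d}$. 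The argument for $B$ is symmetric.

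For the final step, the tt-functor $F_E$ takes products of tt-rings to products and tensor products of tt-rings to $\otimes_E$, so
\[F_E(A\times B)\simeq F_E(A)\times F_E(B)\simeq \unit\ptimes{d}\times\unit\ptimes{e}\simeq\unit\ptimes{d+e}\]
and, using distributivity of $\otimes_E$ over finite products,
\[F_E(A\otimes B)\simeq F_E(A)\otimes_E F_E(B)\simeq\unit\ptimes{d}\otimes_E\unit\ptimes{e}\simeq \unit\ptimes{de}\]
in~$E\MModcat{K}$. Applying Theorem~\ref{thm:1}\,\eqref{it:1-4} (with $B$ there replaced by~$E$) then yields the desired equalities $\deg(A\times B)=d+e$ and $\deg(A\otimes B)=de$.

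The step I expect to require the most care is the verification that the extension-of-scalars isomorphisms are genuine tt-ring isomorphisms rather than mere object-isomorphisms: the formal diagram chase through Remark~\ref{rem:AB} (to see that the isomorphism $F_C(A)\simeq\unit\ptimes d$ remains valid after further extension to $E\MModcat{K}$, and preserves the componentwise and tensor multiplicative structures) is routine but is where most of the bookkeeping lives. Everything else is essentially formal consequences of Theorem~\ref{thm:nat} and Theorem~\ref{thm:1}.
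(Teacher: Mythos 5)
Your proposal is correct and follows essentially the same route as the paper's proof: both invoke Theorem~\ref{thm:1}\,\eqref{it:1-2} to obtain non-zero tt-rings trivializing $A$ and $B$, tensor them together, use locality to see the tensor product is non-zero, compute $F_E(A\times B)$ and $F_E(A\otimes B)$, and conclude via Theorem~\ref{thm:1}\,\eqref{it:1-4}. The only differences are cosmetic: you spell out the $\otimes$-nilpotence argument for $E\neq0$ (the paper just says ``$\bar C\neq0$ since $\cat K$ is local''), and you insert a redundant appeal to Theorem~\ref{thm:nat}\,\eqref{it:nat-4} which is already subsumed by the final application of Theorem~\ref{thm:1}\,\eqref{it:1-4}.
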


\begin{proof}
By Thm.\,\ref{thm:1}\,\eqref{it:1-2}, there exists two tt-rings $\bar A\neq0$ and $\bar B\neq 0$ such that $F_{\bar A}(A)\simeq \unit\ptimes{\deg(A)}$ and $F_{\bar B}(B)\simeq\unit\ptimes{\deg(B)}$. Let then $\bar C=\bar A\otimes\bar B$. Extending scalars from~$\bar A$ and from~$\bar B$ to~$\bar C$ gives $F_{\bar C}(A\times B)\simeq\unit\ptimes{(\deg(A)+\deg(B))}$ and  $F_{\bar C}(A\otimes B)\simeq\unit\ptimes{(\deg(A)\cdot\deg(B))}$. Finally $\bar C\neq0$ since $\cat K$ is local and apply Theorem~\ref{thm:1}\,\eqref{it:1-4}.
\end{proof}

\begin{Rem}
\label{rem:non-t}%
It will be clear to the interested reader that several arguments, mostly the early ones of Section~\ref{se:split}, only depend on the property that split epimorphisms in~$\cat K$ admit a kernel (a property which holds when $\cat K$ is triangulated, regardless of idempotent-completeness). The reader interested in using the degree in that generality will easily adapt our definition. However, all results which involve $\SpcK$, or the support $\supp(A)$, or the local categories $\cat K/\cat P$, as well as the geometric applications in~\cite{Balmer13ppb}, only make sense when $\cat K$ is triangulated. It is nonetheless interesting to be able to speak of the degree in the generality of, say, the category of abelian groups, for instance.
\end{Rem}

\goodbreak
\section{Examples}
\label{se:exas}%
\medskip

We start by quickly discussing tt-rings of minimal degree (beyond $\deg(0)=0$).

\begin{Prop}
\label{prop:deg=1}%
Let $A$ be a tt-ring with $\deg(A)=1$, \ie such that $\mu:A\otimes A\to A$ is an isomorphism. Then $A\otimes-:\cat K\to \cat K$ is a (very special) Bousfield localization with $F_A:\cat K\to \AK$ as (Verdier) localization. Also, $\Spc(\AK)$ is homeomorphic to the open and closed subset $\supp(A)$ of~$\SpcK$. If $\cat K$ is rigid, this further implies a decomposition $\cat K=\cat K_1\times \cat K_2$ under which $A\cong(\unit,0)$.
\end{Prop}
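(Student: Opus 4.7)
The plan hinges on observing that $\deg(A)=1$ means $A\ppp{2}=0$, which by Theorem~\ref{thm:A'} is equivalent to $\mu:A\otimes A\to A$ being an isomorphism (as the statement recalls), and hence to $\eta\otimes A=A\otimes\eta=\mu\inv$, both being forced to be two-sided inverses of $\mu$ by the unit axiom.

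For the first assertion, I would verify that the counit of the adjunction $F_A\dashv U_A$ is a natural isomorphism. On a free module $F_A(y)$ it is $\mu\otimes y:A\otimes A\otimes y\to A\otimes y$, iso by hypothesis; since $\AK$ is the idempotent completion of the Kleisli category $A\FFreecat{K}$ (Section~\ref{se:prelim}) and the counit is natural and additive, it is iso on every object. Hence $U_A$ is fully faithful, $F_A$ is a Verdier localization with kernel the thick $\otimes$-ideal $\cat J=\SET{x\in\cat K}{A\otimes x=0}$, and the endofunctor $A\otimes-\cong U_AF_A$ is the associated Bousfield localization on~$\cat K$, with localizing transformation $\eta\otimes-$ (which becomes iso after $\otimes A$).

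For the spectrum, extend $\eta$ to an exact triangle $\unit\oto{\eta}A\to\cone(\eta)\to\Sigma\unit$. Tensoring with $A$ and using that $A\otimes\eta$ is iso gives $A\otimes\cone(\eta)=0$, so $\supp(A)\cap\supp(\cone(\eta))=\varnothing$; the triangle itself forces $\supp(A)\cup\supp(\cone(\eta))=\SpcK$. Both supports being closed, $\supp(A)$ is open and closed. Next, $A\otimes x=0$ at a prime~$\cat P$ amounts to $\cat P\notin\supp(A)$ or $\cat P\notin\supp(x)$, so $\cat J=\cat K_{\supp(\cone(\eta))}$, a Thomason ideal. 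The standard formula for the spectrum of a Verdier quotient then yields $\Spc(\AK)\cong\SET{\cat P\in\SpcK}{\cat J\subseteq\cat P}=\supp(A)$, homeomorphically onto that clopen.

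Finally, assume $\cat K$ is rigid. The clopen partition $\SpcK=\supp(A)\sqcup\supp(\cone(\eta))$ lifts, by the standard rigid-tt-geometry fact that clopen decompositions of the spectrum correspond to product decompositions of the category (via complementary $\otimes$-idempotents splitting~$\unit$), to an equivalence $\cat K\simeq\cat K_1\times\cat K_2$ with $\Spc(\cat K_1)=\supp(A)$. Writing $A=(A_1,A_2)$, we get $A_2=0$ since $\supp(A)\cap\Spc(\cat K_2)=\varnothing$, and $\supp(A_1)=\Spc(\cat K_1)$; together with $A_1\otimes\cone(\eta_1)=0$ and the multiplicativity of $\supp$ in a rigid tt-category, this yields $\supp(\cone(\eta_1))=\varnothing$, whence $\cone(\eta_1)=0$ and $\eta_1:\unit_1\isoto A_1$. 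Thus $A\cong(\unit,0)$. The principal obstacle is this last step, where one must invoke the (standard but non-trivial) correspondence between clopen decompositions of $\SpcK$ and product decompositions of a rigid~$\cat K$, together with vanishing of compact objects with empty support; the first two steps are largely mechanical once the key identity $A\otimes\eta=\mu\inv$ is recognized.
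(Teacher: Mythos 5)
Your proof is correct and follows the same overall strategy as the paper's: show $A\otimes-$ is a localization, use the triangle on $\eta$ to produce the clopen partition of $\SpcK$, compute $\Spc(\AK)$ via the formula for the spectrum of a Verdier quotient, and decompose $\cat K$ in the rigid case. Two points of comparison are worth recording. For the localization, the paper checks the idempotent-monad criterion directly (the two right inverses $\eta\otimes1$ and $1\otimes\eta$ of the isomorphism $\mu$ coincide and are isomorphisms), whereas you verify that the counit of $F_A\dashv U_A$ is invertible; these are equivalent characterizations and your version is equally fine. For the rigid decomposition, the paper argues more elementarily: rigidity and $\supp(A)\cap\supp(C)=\varnothing$ force $\Homcat{K}(A,\Sigma C)=0$, so the triangle $C\to\unit\to A\to\Sigma C$ splits to give $\unit\cong A\oplus C$, yielding $\cat K=(A\otimes\cat K)\times(C\otimes\cat K)$ with $A$ the unit of the first factor. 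You instead invoke the general correspondence between clopen decompositions of $\SpcK$ and product decompositions of a rigid $\cat K$ via orthogonal idempotents; this is a correct but heavier tool than needed here, and it silently relies on the same splitting. One small overreach: your claim $\cat J=\cat K_{\supp(\cone(\eta))}$ is not quite justified as written, since without rigidity (and this clause of the statement does not assume rigidity) an object with empty support need only be $\otimes$-nilpotent, so $\supp(x)\subseteq\supp(\cone(\eta))$ does not immediately give $A\otimes x=0$. Fortunately this is also more than you need: the triangle shows directly that $A\otimes x=0$ implies $x\cong\cone(\eta)\otimes x$, hence $\cat J=\ideal{\cone(\eta)}$, and then $\SET{\cat P}{\cat J\subseteq\cat P}=\cat U(\cone(\eta))=\supp(A)$, exactly as in the paper.
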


\begin{proof}
Since $\mu$ is an isomorphism, then so are its two right inverses $\eta\otimes1$ and $1\otimes\eta:A\to A\potimes{2}$ and these inverses coincide. So, $L:=A\otimes-$ is a Bousfield localization ($\eta L=L\eta$ is an isomorphism). Let $C\to \unit\oto{\eta} A\to\Sigma(C)$ be an exact triangle on~$\eta$. Since, $A\otimes\eta$ is an isomorphism, we have $A\otimes C=0$. Therefore $\SpcK=\supp(A)\sqcup\supp(C)$ hence $\supp(A)$ is open and closed. Since every object $x\in\cat K$ fits in an exact triangle $C\otimes x\to x\to A\otimes x\to\Sigma(C\otimes x)$, it is standard to show that the kernel of $A\otimes-$ is exactly the thick $\otimes$-ideal $\cat J:=\ideal{C}$ generated by~$C$ and that $F_A$ induces an equivalence $\cat K/\cat J\isotoo \AK$. Hence $\Spc(F_A)$ induces a homeomorphism $\Spc(\AK)\cong\SET{\cat P\in\SpcK}{\cat J\subseteq \cat P}=\SET{\cat P}{C\in\cat P}=\cat U(C)=\supp(A)$. When $\cat K$ is rigid, $\supp(A)\cap \supp(C)=\varnothing$ forces furthermore $\Homcat{K}(A,\Sigma C)=0$, in which case the above triangle splits\,: $\unit\simeq A\oplus C$. This gives the wanted decomposition, where $\cat K_1=A\otimes \cat K$ and $\cat K_2=C\otimes \cat K$.
\end{proof}

We want to show that the degree is finite in examples. Our main tool is\,:
\begin{Thm}
\label{thm:exas}%
Suppose that $\cat K$ admits a conservative tt-functor $F:\cat K\to \cat L$ into a tt-category $\cat L$ such that every object of $\cat L$ is isomorphic to a sum of suspensions of~$\unitcat{L}$. {\rm(\footnote{Such an $\cat L$ is sometimes called a ``field" but the author finds this definition too restrictive. Also note that the existence of such a functor~$F$ forces~$\cat K$ to be local.})} Then every tt-ring in~$\cat K$ has finite degree. More precisely, if $F(A)\simeq\oplus_{i=k}^\ell\Sigma^{i}\unit^{r_i}$ for $r_k,\ldots,r_\ell\in\bbN$ then $\deg(A)=\sum_{i=k}^\ell r_i$.
\end{Thm}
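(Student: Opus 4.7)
The plan is to first invoke Theorem~\ref{thm:nat}\,\eqref{it:nat-2}: since $F:\cat K\to\cat L$ is conservative, it is in particular weakly conservative on~$\cat K_{\supp(A)}$, so $\deg_{\cat K}(A)=\deg_{\cat L}(F(A))$. This reduces the problem to the following statement inside~$\cat L$\,: if $B$ is a tt-ring in $\cat L$ with $B\simeq\bigoplus_{i=k}^{\ell}\Sigma^{i}\unit^{r_i}$, and $r:=\sum_i r_i$, then $\deg_{\cat L}(B)=r$.

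The main tool is Theorem~\ref{thm:1}\,\eqref{it:1-1}\,: for each $n\geq 1$, there is an isomorphism of tt-rings $F_{B\ppp{n}}(B)\simeq \unit\ptimes{n}\times B\ppp{n+1}$ in $B\ppp{n}\MModcat{L}$. I would apply the iterated forgetful functor down to~$\cat L$ (which sends the unit of each $B\ppp{m}\MModcat{L}$ to the underlying object~$B\ppp{m}$ in~$\cat L$) to obtain
\begin{equation*}
B\ppp{n}\otimes B\ \simeq\ (B\ppp{n})^{\oplus n}\oplus B\ppp{n+1}\qquad\textrm{in }\cat L.
\end{equation*}
Since every object of~$\cat L$ is a sum of suspensions of~$\unit$, I would assign to each $X\simeq\bigoplus_j\Sigma^j\unit^{s_j}$ a total rank $\rho(X):=\sum_j s_j\in\bbN$, additive under~$\oplus$, multiplicative under~$\otimes$, and such that $\rho(X)=0$ iff $X=0$. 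Setting $s_n:=\rho(B\ppp{n})$, the above isomorphism yields $s_n\cdot r=n\cdot s_n+s_{n+1}$, i.e.\ the recursion $s_{n+1}=s_n(r-n)$ with $s_1=r$. Solving gives $s_n=r(r-1)\cdots(r-n+1)$ for every $n\geq 1$.

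From $s_{r+1}=s_r(r-r)=0$ I would conclude $B\ppp{r+1}=0$ in~$\cat L$, and hence also in $B\ppp{r}\MModcat{L}$ by conservativity of the forgetful functor; and from $s_r=r!\neq 0$ I would conclude $B\ppp{r}\neq 0$. Therefore $\deg_{\cat L}(B)=r$, which combined with the initial reduction gives the claimed formula. The main obstacle is making the rank function~$\rho$ well-defined, which amounts to a Krull--Schmidt-type uniqueness of the decomposition of objects of~$\cat L$ as sums of suspensions of~$\unit$; this uniqueness holds in all the ``fields" intended by the footnote, and once it is in hand both the finiteness of the degree (from $s_{r+1}=0$, forcing $\deg\leq r<\infty$) and its exact value drop out of the same recursion.
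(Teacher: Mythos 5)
Your argument is correct and follows the paper's reduction steps closely — both proofs first invoke Theorem~\ref{thm:nat}\,\eqref{it:nat-2} to descend to~$\cat L$ and then both lean on Theorem~\ref{thm:1}\,\eqref{it:1-1} — but the way you close the argument is genuinely different. The paper sets $B:=A\ppp{d+1}$, compares $F_B(A)\simeq\unit_B^{d+1}\oplus x$ with $F_B(A)\simeq\oplus_i\Sigma^i\unit_B^{r_i}$, views the resulting split monomorphism $\unit_B^{d+1}\into\oplus_i\Sigma^i\unit_B^{r_i}$ as a split-injective matrix over the graded-commutative ring $S^\sbull=\End^\sbull_B(\unit_B)$, and kills it by reducing modulo a graded maximal ideal (forcing $S^\sbull=0$, i.e.\ $B=0$); then a second application with $B=A\ppp{\deg A}$ pins down the exact value. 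You instead forget everything down to~$\cat L$, introduce a total-rank function $\rho$, and run the recursion $s_{n+1}=s_n(r-n)$, getting both $A\ppp{r+1}=0$ and $A\ppp{r}\neq0$ from the closed form $s_n=r(r-1)\cdots(r-n+1)$. Your route is a bit more explicit and even yields the pleasant bonus formula $\rho(B\ppp{n})=r(r-1)\cdots(r-n+1)$, which the paper does not record. The one point you should tighten is the well-definedness of~$\rho$: you describe it as a ``Krull--Schmidt-type uniqueness of the decomposition,'' but full Krull--Schmidt genuinely fails when $\cat L$ has periodicity ($\Sigma^p\unit\simeq\unit$ makes the individual multiplicities $s_j$ non-unique), and it is also more than you need. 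What you actually need — and what is true whenever $\unit_{\cat L}\neq0$ — is only the invariance of the \emph{total} rank $\sum_j s_j$, which one gets by applying $\Hom^\sbull(\unit,-)$ to obtain a free graded $S^\sbull$-module and counting its rank after reducing modulo a graded maximal ideal; this is exactly the graded-field mechanism that the paper's matrix argument uses directly. With that justification in place your proof is complete.
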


\begin{proof}
By Theorem~\ref{thm:nat}\,\eqref{it:nat-2}, it suffices to prove that every tt-ring $A$ in $\cat L$ has finite degree $d=\sum_{i=k}^\ell r_i$, where $A\simeq\oplus_{i=k}^\ell\Sigma^{i}\unit^{r_i}$ as objects in $\cat L$. First, let $B=A\ppp{d+1}$. Then, by Theorem~\ref{thm:1}\,\eqref{it:1-1}, we have $F_B(A)\simeq \unit^{d+1}\oplus x$ in $B\MModcat{L}$. On the other hand $F_B(A)\simeq \oplus_{i=k}^\ell\Sigma^{i}\unit^{r_i}$. Therefore there is a split monomorphism $\unit^{d+1}\into \oplus_{i=k}^\ell\Sigma^{i}\unit^{r_i}$ in $B\MModcat{L}$ which can be described by a split injective $d\times (d+1)$-matrix with coefficients in the graded-commutative ring~$S^\sbull=\End^\sbull_B(\unit_B)$. This is impossible (by mapping to a graded residue field of $S^\sbull$) unless $S^\sbull=0$, that is $B=0$ meaning $A\ppp{d+1}=0$. Hence $\deg(A)\leq d$. Now, replace $B$ by $A\ppp{\deg(A)}$ and reason as above. We now have isomorphisms $F_B(A)\simeq \unit^{\deg(A)}$ and $F_B(A)\simeq \oplus_{i=k}^\ell\Sigma^{i}\unit^{r_i}$ in $B\MModcat{L}$ with $B\neq0$. The isomorphism $\unit^{\deg(A)}\simeq \oplus_{i=k}^\ell\Sigma^{i}\unit^{r_i}$ forces (periodicities $\Sigma^{i}\unit\simeq \unit$ in $B\MModcat{K}$ whenever $r_i\neq0$ and) $\deg(A)=\sum_{i=k}^\ell r_i$.
\end{proof}

\begin{Cor}
\label{cor:nat-X}%
Let $X$ be a quasi-compact and quasi-separated scheme (\eg\ an affine or a noetherian scheme). Then every tt-ring in $\Dperf(X)$ has finite degree.
\end{Cor}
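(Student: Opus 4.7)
The plan is to combine the local--global reduction of Theorem~\ref{thm:local} with the rigidity result of Theorem~\ref{thm:exas}. By Theorem~\ref{thm:local}, it suffices to verify, for every prime $\cat P \in \Spc(\Dperf(X))$, that the image $\qp(A)$ has finite degree in the local tt-category $\Dperf(X)_{\cat P}$.

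By Thomason's reconstruction theorem, $\Spc(\Dperf(X)) \cong X$, so each prime $\cat P$ corresponds to a scheme-theoretic point $x \in X$ with residue field $k(x)$. The strategy is then to exhibit a conservative tt-functor
\[
\Dperf(X)_{\cat P} \too \Dperf(k(x))
\]
and invoke Theorem~\ref{thm:exas}: indeed, every perfect complex over a field is a finite direct sum of suspensions of the unit, so $\Dperf(k(x))$ satisfies the ``field-like'' hypothesis of that theorem. This will produce $\deg(\qp(A))$ as the total graded rank of $A \otimes^{\LL}_{\calO_X} k(x)$ over $k(x)$.

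To build this functor, I would first pick an affine open $\Spec R \subset X$ containing~$x$; the restriction $\Dperf(X) \to \Dperf(R)$ is a tt-functor which sends $\cat P$ into the corresponding prime of $\Dperf(R)$, so it descends, after passage to idempotent-completed Verdier quotients, to a tt-functor on local categories. Then for $X = \Spec R$ and $x$ corresponding to a prime ideal $\gm$, I would compose with the localization $\Dperf(R) \to \Dperf(R_\gm)$ and finally with the residue functor $-\otimes^{\LL}_{R_\gm} k(\gm) \colon \Dperf(R_\gm) \to \Dperf(k(\gm))$. Conservativity of this composite is the content of derived Nakayama: a perfect complex $M$ over a local ring satisfying $M \otimes^{\LL} k(\gm) = 0$ must vanish, since otherwise the top nonzero homology module would survive after reducing modulo $\gm$.

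The main technical step is the bookkeeping required to realize the local category $\Dperf(X)_{\cat P}$ as actually receiving such a functor, which matters especially in the non-noetherian qcqs setting where $\Dperf(\calO_{X,x})$ is not literally read off from an affine neighborhood. However, this is handled exactly by the two-step passage through an affine open followed by the localization at~$\gm$: the composite kills every object of $\cat P$ (its support avoids~$x$, so its image vanishes after inverting elements outside~$\gm$), hence factors through $\Dperf(X)_{\cat P}$. Once this is set up, Theorem~\ref{thm:exas} applies verbatim and gives finiteness of the degree at every point, whence finiteness globally via Theorem~\ref{thm:local}.
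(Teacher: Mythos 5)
Your proof is correct and follows essentially the same route as the paper: reduce to local tt-categories via Theorem~\ref{thm:local}, then feed the residue-field functor into Theorem~\ref{thm:exas}. The paper's own proof is terser (it simply replaces the local category at~$\cat P$ by $\Dperf(R)$ for a local ring~$(R,\gm)$ and invokes conservativity of $\Dperf(R)\to\Dperf(R/\gm)$); your explicit construction of the conservative tt-functor out of $\Dperf(X)_{\cat P}$ via an affine open and derived Nakayama just makes explicit the bookkeeping the paper leaves implicit.
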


\begin{proof}
By Theorem~\ref{thm:local}, we can assume that $X=\Spec(R)$ with $(R,\gm)$ local. Then, the functor $\Dperf(R)\to\Dperf(k)$ to the residue field $k=R/\gm$ is conservative.
\end{proof}

\begin{Exa}
Let $A$ be a separable commutative $R$-algebra which is projective as $R$-module (and finitely generated by~\cite[Prop.\,II.2.1]{DeMeyerIngraham71}). Since $A$ is $R$-flat, we can view it as the ``same" tt-ring in~$\Dperf(R)$. Then its degree can be computed in every residue field, hence $\deg(A)$ coincides with the rank of~$A$ as $R$-module.
\end{Exa}

\begin{Cor}
Let $\bbH$ be a finite dimensional cocommutative Hopf algebra over a field~$\kk$. Then every tt-ring in the bounded derived category $\Db(\bbH\mmod)$ of finitely generated $\bbH$-modules (with $\otimes=\otimes_\kk$) has finite degree.
\end{Cor}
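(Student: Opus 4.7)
The plan is to apply Theorem~\ref{thm:exas} directly, with the test tt-category $\cat L$ taken to be $\Db(\kk\mmod)$, the bounded derived category of finite-dimensional $\kk$-vector spaces with the ordinary $\otimes_\kk$.

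The candidate conservative tt-functor is the forgetful functor
\[
F\colon \Db(\bbH\mmod)\too \Db(\kk\mmod)
\]
that forgets the $\bbH$-action. First I would check that $F$ is indeed a tt-functor: the symmetric monoidal structure on $\bbH\mmod$ is given by $\otimes_\kk$ with diagonal $\bbH$-action via the comultiplication of~$\bbH$ (it is precisely cocommutativity of~$\bbH$ that makes this symmetric, so that $\Db(\bbH\mmod)$ is a tt-category at all), and forgetting the action visibly preserves this tensor product together with its unit, associator and symmetry. The functor $F$ is conservative because a bounded complex of finitely generated $\bbH$-modules whose underlying complex of $\kk$-vector spaces is acyclic is itself acyclic, hence zero in the derived category.

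Next I would verify the structural hypothesis on $\cat L=\Db(\kk\mmod)$\,: every object is isomorphic to a finite sum of suspensions of the unit $\kk$. This holds because $\kk$ is a field, so every bounded complex of finite-dimensional vector spaces is quasi-isomorphic to its cohomology, which is itself a finite direct sum $\oplus_i\Sigma^{-i}\kk^{r_i}$. Thus $\cat L$ satisfies the hypothesis of Theorem~\ref{thm:exas} (and is in particular ``local'' in the sense needed there).

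With these facts in hand, Theorem~\ref{thm:exas} applies verbatim and gives that every tt-ring $A$ in $\Db(\bbH\mmod)$ has finite degree, equal to $\sum_i r_i=\dim_\kk\rmH^*(A)$ where $F(A)\simeq\oplus_i\Sigma^{-i}\kk^{r_i}$. There is essentially no obstacle: the only thing worth pausing over is the role of cocommutativity of~$\bbH$, which is needed not for our argument but to ensure that the ambient category is a tt-category in the first place.
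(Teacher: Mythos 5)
Your proposal is correct and is exactly the paper's argument: the paper applies Theorem~\ref{thm:exas} to the fiber (forgetful) functor $\Db(\bbH\mmod)\to\Db(\kk\mmod)$, which is precisely your~$F$. You simply spell out the verification of conservativity and of the structure of~$\Db(\kk\mmod)$ that the paper leaves implicit.
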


\begin{proof}
Apply Theorem~\ref{thm:exas} to the fiber functor $\Db(\bbH\mmod)\to \Db(\kk)$.
\end{proof}

\begin{Exa}
\label{exa:nat-G/H}%
For any finite group~$G$, all tt-rings in $\DbG$ have finite degree. For every subgroup $H\leq G$, the tt-ring $A=\kk(G/H)$ has finite degree $\deg(A)=\dim_\kk(A)=[G\!:\!H]$ in $\DbG$. Hence $A$ has also finite degree in $\stab(\kk G)\cong\frac{\DbG}{\Dperf(\kk G)}$ by Theorem~\ref{thm:nat}\,\eqref{it:nat-1}. However, if $H<G$ is a strongly $p$-embedded subgroup then $F_A\cong\Res^G_H$ is an equivalence $\stab(\kk G)\isotoo \stab(\kk H)$ and $\eta_A:\unit\isoto A$ is an isomorphism, hence $\deg(A)=1$ in $\stab(\kk G)$. (Example\,: $p=2$ and $C_2<S_3$.)
\end{Exa}

\begin{Exa}
Let $H_1,H_2$ be two non-conjugate cyclic subgroups of order~$p$ in~$G$ (for instance two non-conjugate symmetries in $D_8$ for $p=2$) and consider $A_i=\kk(G/H_i)$ in~$\cat K=\stab(\kk G)$ as above. Then, by the Mackey formula $A_1\otimes A_2=0$. Consequently they have disjoint support and therefore both formulas of Corollary~\ref{cor:AB} fail in this case, showing the importance of our assumption that the category be local. Yet, one can still deduce global formulas via Theorem~\ref{thm:local}.
\end{Exa}

\begin{Cor}
\label{cor:SH}%
In the stable homotopy category $\cat K=\SHfin$ of finite (topological) spectra, every tt-ring has finite degree.
\end{Cor}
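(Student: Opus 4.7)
The plan is to reduce to the local situation using Theorem~\ref{thm:local} and then apply Theorem~\ref{thm:exas} at each local category, the way the previous two corollaries are proved. So it suffices to show that for every prime $\cat P\in\Spc(\SHfin)$, the image $\qp(A)$ has finite degree in the local category $(\SHfin)_{\cat P}$.

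The classification of prime ideals in $\SHfin$ by Hopkins--Smith identifies $\Spc(\SHfin)$ with the disjoint union (over rational primes $p$) of chains $\cat P_{p,1}\supset\cat P_{p,2}\supset\cdots\supset\cat P_{p,\infty}$, where $\cat P_{p,\infty}$ is the ``generic'' point corresponding to rationalization. At the rational prime, the local category is equivalent to $\Db(\bbQ\mmod)$, and rational homology yields a conservative tt-functor to $\Db(\bbQ\mmod)$, in which every object is a sum of suspensions of the unit (since $\bbQ$ is a field). Theorem~\ref{thm:exas} then gives finite degree for the rational image of $A$.

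At a prime $\cat P_{p,n}$ with $1\le n<\infty$, the plan is to use Morava $K$-theory $K(n)$ at height $n$ and prime $p$. Since $K(n)_\sbull=\bbF_p[v_n^{\pm1}]$ is a graded field, every graded $K(n)_\sbull$-module is a sum of suspensions of the unit, so the target tt-category satisfies the hypothesis of Theorem~\ref{thm:exas}. The crucial point is that the induced tt-functor on the local category $(\SHfin)_{\cat P_{p,n}}$ is conservative; this is precisely the content of the Hopkins--Smith nilpotence theorem, which says that an object of $\SHfin_{(p)}$ with vanishing $K(n)$-homology (and which lies outside $\cat P_{p,n}$ in the appropriate sense) is $\otimes$-nilpotent, so descends to zero in the local category. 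Theorem~\ref{thm:exas} then bounds the degree of $\qp(A)$ at $\cat P_{p,n}$.

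Combining these finite bounds over all $\cat P\in\Spc(\SHfin)$, Theorem~\ref{thm:local} concludes that $A$ has finite degree in $\SHfin$. The main obstacle is the appeal to deep chromatic input: one needs the Balmer spectrum computation of $\SHfin$ and the nilpotence theorem to guarantee that Morava $K$-theory is conservative on each local category. Once these facts are in hand, the proof is the same pattern as Corollaries~\ref{cor:nat-X} and the Hopf algebra corollary above.
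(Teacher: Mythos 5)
Your proof is essentially correct in outline, but it takes a genuinely different and considerably heavier route than the paper. You run the local-to-global machine of Theorem~\ref{thm:local} over the full Balmer spectrum of $\SHfin$, which you need Hopkins--Smith to compute, and then apply Theorem~\ref{thm:exas} at each chromatic prime $\cat P_{p,n}$ using Morava $K$-theory $K(n)$; the conservativity you need there is precisely the nilpotence theorem. The paper avoids all of this chromatic input. It only uses the \emph{arithmetic} localizations $\SHfin_{\bbQ}$ and $\SHfin_{(p)}$, applying Theorem~\ref{thm:exas} to rational homology (landing in $\Db(\bbQ\mmod)$) and to mod-$p$ homology $H(-;\bbZ/p)$ (landing in $\Db(\bbZ/p\mmod)$). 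Conservativity of $H(-;\bbZ/p)$ on $p$-local finite spectra is an elementary fact about finitely generated abelian homotopy groups, not a deep theorem. It then patches these together without invoking Theorem~\ref{thm:local}: rational finiteness lets one replace $A$ by some $A\ppp{m}$ that is torsion, a torsion finite spectrum is nonzero in $\SHfin_{(p)}$ for only finitely many $p$, and $p$-local finiteness at those finitely many primes forces $A\ppp{n}=0$ for $n$ large. What your approach buys is conceptual uniformity with Corollary~\ref{cor:nat-X} and the Hopf-algebra corollary; what the paper's approach buys is independence from the nilpotence theorem and the Balmer spectrum computation of $\SHfin$, making the result far more self-contained. (One small caution about your version: be careful with the indexing of primes vs.\ Morava heights, and note that the putative target ``tt-category'' of graded $K(n)_*$-modules needs to be packaged as a genuine triangulated tensor category, e.g.\ the derived category of graded $K(n)_*$-modules, for Theorem~\ref{thm:exas} to apply as stated.)
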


\begin{proof}
First note that the result is true in the localizations $\SHfin_{\bbQ}\cong\Db(\bbQ\mmod)$ and $\SHfin_{p}$, at zero and at each prime~$p$. For the latter, it suffices to apply Theorem~\ref{thm:exas} to homology with coefficients in $\bbZ/p$, which is conservative on~$\SHfin_{p}$ and takes values in $\Db(\bbZ/p\mmod)$. Now, if $A$ is a tt-ring in~$\SHfin$, then there exists $m\geq 1$ such that $A\ppp{m}$ goes to zero in~$\SHfin_{\bbQ}$ (since its degree is finite there). Replacing $A$ by $A\ppp{m}$, we can assume that $A$ itself maps to zero in~$\SHfin_{\bbQ}$, that is, $A$ is torsion. But then $A$ is non-zero in $\SHfin_{p}$ for only finitely many primes~$p$. Therefore we can find $n$ big enough so that $A\ppp{n}=0$ everywhere. Hence $A\ppp{n}=0$.
\end{proof}

\medbreak\noindent\textbf{Acknowledgments}\,:
It is my pleasure to thank an anonymous referee for several useful suggestions, including the one discussed in Remark~\ref{rem:non-t} and a simplification of the proof of Corollary~\ref{cor:SH}.

\medbreak
\bibliographystyle{alpha}%

\begin{thebibliography}{CHR65}

\bibitem[Bal11]{Balmer11}
Paul Balmer.
\newblock Separability and triangulated categories.
\newblock {\em Adv. Math.}, 226(5):4352--4372, 2011.

\bibitem[Bal12]{Balmer12bpp}
Paul Balmer.
\newblock Stacks of group representations.
\newblock {\em J. Eur. Math. Soc. (JEMS)}, preprint 2012.
\newblock 36~pages, to appear.

\bibitem[Bal13]{Balmer13ppb}
Paul Balmer.
\newblock Separable extensions in tt-geometry and generalized {Q}uillen
  stratification.
\newblock Preprint, 14~pages, 2013.

\bibitem[CHR65]{ChaseHarrisonRosenberg65}
S.~U. Chase, D.~K. Harrison, and Alex Rosenberg.
\newblock Galois theory and {G}alois cohomology of commutative rings.
\newblock {\em Mem. Amer. Math. Soc. No.}, 52:15--33, 1965.

\bibitem[DI71]{DeMeyerIngraham71}
F.~DeMeyer and E.~Ingraham.
\newblock {\em Separable algebras over commutative rings}.
\newblock Lecture Notes in Mathematics, Vol. 181. Springer-Verlag, Berlin,
  1971.

\bibitem[EM65]{EilenbergMoore65}
Samuel Eilenberg and John~C. Moore.
\newblock Adjoint functors and triples.
\newblock {\em Illinois J. Math.}, 9:381--398, 1965.

\bibitem[HPS97]{HoveyPalmieriStrickland97}
Mark Hovey, John~H. Palmieri, and Neil~P. Strickland.
\newblock Axiomatic stable homotopy theory.
\newblock {\em Mem. Amer. Math. Soc.}, 128(610), 1997.

\bibitem[Kle65]{Kleisli65}
Heinrich Kleisli.
\newblock Every standard construction is induced by a pair of adjoint functors.
\newblock {\em Proc. Amer. Math. Soc.}, 16:544--546, 1965.

\bibitem[ML98]{MacLane98}
Saunders Mac~Lane.
\newblock {\em Categories for the working mathematician}, volume~5 of {\em
  Graduate Texts in Mathematics}.
\newblock Springer-Verlag, New York, 1998.

\end{thebibliography}

\end{document}